\crefname{hypothesis}{Hypothesis}{Hypotheses}
\DeclareMathOperator{\nulls}{null}
\DeclareMathOperator{\range}{range}
\DeclareMathOperator{\rank}{rank}
\newcolumntype{P}[1]{>{\centering\arraybackslash}p{#1}}
\title{Ideal Preconditioners for Saddle Point Systems with a Rank-Deficient Leading Block\thanks{\today}}
\author{Susanne Bradley\thanks{Department of Computer Science, University of British Columbia, Vancouver, BC, Canada. 
		(\email{smbrad@cs.ubc.ca}, \url{http://susannebradley.com}).}}
\begin{document}

\maketitle

\begin{abstract}
  We consider the iterative solution of symmetric saddle point systems with a rank-deficient leading block. We develop two preconditioners that, under certain assumptions on the rank structure of the system, yield a preconditioned matrix with a constant number of eigenvalues. We then derive some properties of the inverse of a particular class of saddle point system and exploit these to develop a third preconditioner, which remains ideal even when the earlier assumptions on rank structure are relaxed.
\end{abstract}



\section{Introduction}
Consider the saddle point linear system
\begin{equation}
\label{eqn:firstsp}
\underbrace{\begin{bmatrix}
	A & B^T \\
	B & 0
	\end{bmatrix}}_{=:\mathcal{K}}
\begin{bmatrix}
x \\
y
\end{bmatrix} =
\begin{bmatrix}
f \\
g
\end{bmatrix},
\end{equation}
where $A \in \mathbb{R}^{n \times n}$ is symmetric positive semidefinite, $B \in \mathbb{R}^{m \times n}$, and $m \le n$. We assume throughout this report that $B$ has full row rank (a necessary condition for $\mathcal{K}$ to be nonsingular).

When $A$ is positive definite, certain block Schur complement preconditioners yield preconditioned matrices with a constant number of eigenvalues \cite{murphy99}. Our goal in this report is to develop such ideal preconditioners when $A$ is singular. A common method for dealing with a singular $A$ is the \textit{augmented Lagrangian} approach (see, e.g., \cite{golub05}), which replaces $A$ with a positive definite augmented leading block of the form
$$
A + B^TW^{-1}B,
$$
where $W$ is a positive definite weight matrix. Our approach here is similar, but rather than augment with the entire matrix $B$, we will use only as many rows of $B$ as are necessary to alleviate the rank-deficiency of $A$.

We will see that we can sometimes develop preconditioners that are not ideal in general, but are ideal under certain assumptions on the rank structure of $\mathcal{K}$. Here we define two such assumptions that will be relevant in the upcoming discussion.

\begin{definition}[Maximal rank-deficiency]
	Let $\mathcal{K}$ be a nonsingular symmetric saddle point matrix with $A \in \mathbb{R}^{n \times n}$ and $B \in \mathbb{R}^{m \times n}$. We say that $\mathcal{K}$ has a \underline{maximally} \underline{rank-deficient leading block} if $\nulls(A) = m$.
\end{definition}

Because $\mathcal{K}$ is nonsingular if and only if $\ker(A) \cap \ker(B) = \emptyset$ \cite[Theorem 3.2]{benzi05}, having the nullity of $A$ be greater than $m$ would necessarily make $\mathcal{K}$ singular -- hence the term ``maximally rank-deficient.'' Estrin and Greif \cite{estrin15} develop preconditioners for matrices with this property.

\begin{definition}[Minimal independence]
	Let $\mathcal{K}$ be a nonsingular symmetric saddle point matrix whose leading block $A$ has nullity $m_2 = m - m_1$, for $m_1 \ge 0$. We say that $\mathcal{K}$ has \underline{minimally independent off-diagonal blocks} if $B$ contains $m_2$ row vectors outside the row space of $A$ and $m_1$ vectors within the row space. We write this as a $3 \times 3$ blocking of $\mathcal{K}$:
	\begin{equation}
	\label{eqn:minind_split}
	\mathcal{K} = \begin{bmatrix}
	A & B_1^T & B_2^T \\
	B_1 & 0 & 0 \\
	B_2 & 0 & 0
	\end{bmatrix},
	\end{equation}
	where the rows of $B_1 \in \mathbb{R}^{m_1 \times n}$ are in the row space of $A$ and those of $B_2 \in \mathbb{R}^{m_2 \times n}$ are outside the row space of $A$.
\end{definition}

Assuming that $B_1$ contains the rows that are linearly dependent on $A$ incurs no loss of generality because, for any $\mathcal{K}$ that satisfies the minimal independence condition, we can permute $B$ so that this happens. We call this ``minimal independence'' because if there were any fewer linearly independent rows in $B_2$, then $\mathcal{K}$ would be singular. Matrices with a maximally rank-deficient leading block are a special case, with $m_1$ (the number of rows in the dependent block $B_1$) being zero. Notice also that this definition guarantees that the submatrix
$$
\begin{bmatrix}
A & B_2^T \\ B_2 & 0
\end{bmatrix}
$$
is invertible.

In \cref{sec:prec_diag}, we present two preconditioners that are ideal in the cases of maximal rank-deficiency of $A$ and minimal independence of $B$. \Cref{sec:inv} explores some unique properties of matrices with minimally independent $B$, which we then exploit in \cref{sec:prec_gen} to develop a $3 \times 3$-block preconditioner that remains ideal even when the minimal independence assumption no longer holds. \Cref{tab:summary} summarizes the three preconditioners presented in this report.

\begin{table}[tbh!]
\centering
\begin{tabular}{ p{2.9cm} | P{2.7cm} | P{2.7cm} | P{2.7cm} |}
	& \multicolumn{3}{| c |}{{\cellcolor[gray]{0.9}}\textbf{Preconditioner}}\\
	& $\mathcal{P}_{2,D}$ & $\mathcal{P}_{3,D}$ & $\mathcal{P}_{3,T}$ \\
	\multicolumn{4}{| l |}{{\cellcolor[gray]{0.9}}\textbf{Conditions under which $\mathcal{P}$ is ideal}} \\
	\multicolumn{1}{|c|}{Assumed null$(A)$:} & $m$ & less than $m$ & less than $m$ \\
	\hline
	\multicolumn{1}{|c|}{Assumptions on $B$:} & none & minimal independence & none \\
	\multicolumn{4}{| l |}{{\cellcolor[gray]{0.9}}\textbf{Properties of $\mathcal{P}$}} \\
	\multicolumn{1}{|c|}{$\mathcal{P}$ positive definite?} & yes & yes & no \\
	\hline
	\multicolumn{1}{|c|}{Need to split $B$?} & no & yes & yes \\
	\hline
	\multicolumn{1}{|c|}{Need $\ker(A)$ basis?} & no & no & yes (or $\ker(B_2)$) \\
	\hline
	\multicolumn{1}{|c|}{Described in:} & \Cref{sec:prec_maxrd} (eq. \cref{eqn:pd2}) & \Cref{sec:prec_minind} (eq. \cref{eqn:pd3}) & \Cref{sec:prec_gen} (eq. \cref{eqn:p3t}) \\
	\hline
\end{tabular}
\caption{Summary of preconditioners for nonsingular saddle point systems with a singular leading block, where $A \in \mathbb{R}^{n \times n}$, $B \in \mathbb{R}^{m \times n}$.} \label{tab:summary}
\end{table}

\section{Preconditioners for special cases}
\label{sec:prec_diag}

\subsection{A preconditioner for maximal rank-deficiency}
\label{sec:prec_maxrd}
We assume here that $\nulls(A) =m$ (the number of rows in $B$). This means we must augment the leading block with all the rows of $B$ in order to make it full rank. For a positive definite weight matrix $W_B \in \mathbb{R}^{m \times m}$, we define the augmented system $\mathcal{K}_2(W_B)$ by
\begin{equation}
\label{eqn:k2w}
\mathcal{K}_2(W_B) := \begin{bmatrix}
A + B^T W_B^{-1}B & B^T \\
B & 0
\end{bmatrix}.
\end{equation}

We recall the following result about the Schur complement of the leading block in $\mathcal{K}_2(W_B)$ (see \cite[Theorem 3.5]{estrin15}):
\begin{proposition}
	\label{thm:schur_w}
	Suppose $\nulls(A) = m$ and let $W_B \in \mathbb{R}^{m \times m}$ be an invertible matrix. Then
	$$
	B(A+B^T W_B^{-1}B)^{-1}B^T = W_B.
	$$
\end{proposition}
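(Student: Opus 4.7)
The strategy I would take is to avoid inverting $M := A + B^T W_B^{-1} B$ directly. Instead, I would exhibit an explicit matrix $Z \in \mathbb{R}^{n \times m}$ satisfying $MZ = B^T$, and then read off $BZ$: once $M$ is known to be nonsingular, this gives $BM^{-1}B^T = BZ$ for free.

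The natural candidate for $Z$ comes from exploiting $\ker(A)$. Because $\nulls(A) = m$, I would pick any basis $N \in \mathbb{R}^{n\times m}$ of $\ker(A)$, so that $AN = 0$. Nonsingularity of $\mathcal{K}$ forces $\ker(A) \cap \ker(B) = \{0\}$, so the map $N \mapsto BN$ is injective, and hence $BN \in \mathbb{R}^{m \times m}$ is invertible. I would then set $Z := N(BN)^{-1} W_B$ and verify directly that $AZ = 0$ and $BZ = BN(BN)^{-1}W_B = W_B$; combining these gives $MZ = AZ + B^T W_B^{-1}(BZ) = B^T W_B^{-1} W_B = B^T$, as desired.

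What is left is to check that $M$ is actually invertible, so that the $Z$ above really is $M^{-1}B^T$. If $Mv = 0$, then $Av = -B^T W_B^{-1} Bv$ puts a single vector simultaneously in $\range(A)$ and $\range(B^T)$. Since $A$ is symmetric, $\range(A) = \ker(A)^\perp$ has dimension $n-m$, while $\range(B^T)$ has dimension $m$; the condition $\ker(A) \cap \ker(B) = \{0\}$ translates, after taking orthogonal complements, into $\range(A) + \range(B^T) = \mathbb{R}^n$, so by dimension count the two ranges intersect only at the origin. Therefore $Av = 0$ and $B^T W_B^{-1} Bv = 0$; the latter, together with $\ker(B^T) = \{0\}$ and invertibility of $W_B$, yields $Bv = 0$, and then $v \in \ker(A) \cap \ker(B) = \{0\}$.

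I do not anticipate a real obstacle: the whole argument is a guess-and-check driven by the kernel structure of $A$. The only place where one must be mildly careful is the invertibility step for $M$, since $W_B$ is assumed only invertible (not symmetric or positive definite), which rules out the slick quadratic-form argument; the range-decomposition argument above handles the general case cleanly.
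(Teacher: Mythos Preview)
Your proof is correct. Note, however, that the paper does not supply its own proof of this proposition: it is stated as a recalled result with a citation to \cite[Theorem~3.5]{estrin15}, so there is no in-paper argument to compare against. Your approach---choosing a basis $N$ of $\ker(A)$, setting $Z = N(BN)^{-1}W_B$, and verifying $MZ = B^T$ directly---is clean and self-contained, and in fact anticipates the null-space machinery the paper develops later: your identity $M^{-1}B^T = N(BN)^{-1}W_B$ is exactly the $m_2 = m$ specialization of \cref{thm:act_null}. Your separate invertibility argument for $M$ via the direct-sum decomposition $\range(A) \oplus \range(B^T) = \mathbb{R}^n$ is a nice touch, since it covers the case of merely invertible (not positive definite) $W_B$ that the statement allows.
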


Motivated by this result, we precondition $\mathcal{K}$ by taking the block diagonal Schur complement preconditioner \cite[equation (2)]{murphy99} of $\mathcal{K}_2(W_B)$, given by
\begin{equation}
\label{eqn:pd2}
\mathcal{P}_{2,D} := \begin{bmatrix}
A + B^TW_B^{-1}B & 0 \\
0 & W_B
\end{bmatrix}.
\end{equation}
The 2 in the subscript refers to the block size of the matrix and $D$ refers to the fact that the preconditioner is block diagonal. This particular preconditioner is also described in \cite{greif06}.

\begin{theorem}
	Suppose $\nulls(A) = m$ and let $W_B \in \mathbb{R}^{m \times m}$ be a positive definite matrix. The preconditioned matrix $\mathcal{P}_{2,D}^{-1}\mathcal{K}$ has eigenvalues $\lambda = 1, -1$, with respective geometric multiplicities $n$ and  $m$.
\end{theorem}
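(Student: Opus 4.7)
The plan is to analyze the generalized eigenvalue problem $\mathcal{K} v = \lambda \mathcal{P}_{2,D} v$ directly, splitting $v = (x,y)^T$ into blocks conforming to $\mathcal{K}$. Before doing so, I would briefly note that $\mathcal{P}_{2,D}$ is actually invertible: $A+B^TW_B^{-1}B$ is a sum of two PSD matrices, and any vector in its kernel must be annihilated by both $A$ and $B$, contradicting the nonsingularity of $\mathcal{K}$ (which is equivalent to $\ker(A)\cap\ker(B)=\{0\}$). Thus the generalized eigenvalue formulation is equivalent to finding eigenpairs of $\mathcal{P}_{2,D}^{-1}\mathcal{K}$.

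The two block equations are
\begin{align*}
Ax + B^T y &= \lambda(A + B^TW_B^{-1}B)x, \\
Bx &= \lambda W_B y.
\end{align*}
For $\lambda=1$, the first equation simplifies to $B^T(y - W_B^{-1}Bx)=0$; since $B$ has full row rank, $B^T$ is injective, forcing $y = W_B^{-1}Bx$. Substitution into the second equation gives an identity, so $x\in\mathbb{R}^n$ is free. This yields the $n$-dimensional eigenspace $\{(x,\,W_B^{-1}Bx) : x\in\mathbb{R}^n\}$.

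For $\lambda = -1$, the second equation gives $y = -W_B^{-1}Bx$, and substituting into the first yields $2Ax = 0$, i.e., $x\in\ker(A)$. Since $\nulls(A)=m$ by hypothesis, this produces an $m$-dimensional eigenspace $\{(x,\,-W_B^{-1}Bx) : x\in\ker(A)\}$. Adding the two geometric multiplicities gives $n+m$, which equals the size of $\mathcal{K}$, so these account for all eigenvalues and no other value of $\lambda$ arises.

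I do not expect any serious obstacle here; the only place requiring care is the $\lambda=1$ step, where one must invoke the full row rank of $B$ to cancel $B^T$ and conclude that \emph{every} $x\in\mathbb{R}^n$ (not just $x\in\ker(A)$) produces an eigenvector. The $\lambda=-1$ case is the mirror computation, and the counting argument closes the proof without ever needing to invoke \Cref{thm:schur_w} explicitly, although that proposition is the conceptual motivation for why $W_B$ is the correct choice in the $(2,2)$ block of $\mathcal{P}_{2,D}$.
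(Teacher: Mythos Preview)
Your proposal is correct and follows essentially the same approach as the paper: set up the generalized eigenvalue problem, exhibit the $n$-dimensional eigenspace for $\lambda=1$ and the $m$-dimensional eigenspace for $\lambda=-1$, and conclude by counting. The paper simply verifies that the stated vectors satisfy the equations, whereas you derive them (and add the brief check that $\mathcal{P}_{2,D}$ is invertible), but this is a difference in presentation rather than in method.
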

\begin{proof}
	The generalized eigenvalue problem is
	\begin{subequations}
	\label{eqn:eig_pd2}
	\begin{alignat}{2}
	\label{eqn:eig_pd2_x}
	A&x + B^Ty &&= \lambda Ax + \lambda B^TW_B^{-1}Bx; \\
	\label{eqn:eig_pd2_y}
	B&x &&= \lambda W_B y.
	\end{alignat}
	\end{subequations}
	We immediately obtain $n$ eigenvectors by observing that \cref{eqn:eig_pd2} is satisfied for $\lambda = 1$ by vectors of the form
	$$
	\begin{bmatrix}
	x \\
	W_B^{-1}Bx
	\end{bmatrix}, \ \ \ x \in \mathbb{R}^{n}.
	$$
	Moreover, any vector of the form
	$$
	\begin{bmatrix}
	x \\
	-W_B^{-1}Bx
	\end{bmatrix}, \ \ \ x \in \ker(A),
	$$
	satisfies \cref{eqn:eig_pd2} for $\lambda = -1$. Because $\nulls(A) = m$, we have accounted for all $n+m$ eigenvectors of $\mathcal{P}_{2,D}^{-1}\mathcal{K}$.
\end{proof}

This preconditioner can be used when $A$ is not maximally rank-deficient, though it will no longer be ideal. We refer the reader to \cite{greif06} for analysis of this case.

\subsection{A preconditioner for minimal independence}
\label{sec:prec_minind}
We now assume that $B$ is minimally independent and that the $m$ rows of $B$ have been partitioned as in \cref{eqn:minind_split} --i.e., that with $\nulls(A) = m_2$,
$$
\mathcal{K} = \begin{bmatrix}
A & B_1^T & B_2^T \\
B_1 & 0 & 0 \\
B_2 & 0 & 0
\end{bmatrix},
$$
where the rows of $B_1 \in \mathbb{R}^{m_1 \times n}$ are linearly dependent on the rows of $A$ and those of $B_2 \in \mathbb{R}^{m_2 \times n}$ are linearly independent.

The matrix $A + B_2^TW^{-1}B_2$ will positive definite for any positive definite weight matrix $W \in \mathbb{R}^{m_2 \times m_2}$ (we omit the subscript to distinguish this from the $m \times m$ weight matrix $W_B$ in the previous subsection). We adapt the preconditioner $\mathcal{P}_{2,D}$ to this case by defining the $3 \times 3$ block diagonal preconditioner:
\begin{equation}
\label{eqn:pd3}
\mathcal{P}_{3,D} = \begin{bmatrix}
A + B_2^T W^{-1}B_2 & 0 & 0 \\
0 & \Big(B_1(A + B_2^T W^{-1}B_2)^{-1}B_1^T\Big)/2 & 0 \\
0 & 0 & W
\end{bmatrix}.
\end{equation}
When $m_1 = 0$ (i.e., $B$ contains no rows that depend on $A$), this preconditioner reduces to $\mathcal{P}_{2,D}$. We can view this preconditioner as applying $\mathcal{P}_{2,D}$ to the subsystem
$
\begin{bmatrix}
A & B_2^T \\
B_2 & 0
\end{bmatrix}
$
and then handling the additional block $B_1$ with a Schur complement of the augmented leading block. We will show that the scaling factor of $\frac{1}{2}$ for this Schur complement leads to fewer eigenvalues of the preconditioned operator.

\subsection{Spectral analysis for the minimally independent case}
\label{sec:prec_ind_eig}

Before presenting the spectral analysis of $\mathcal{P}_{3,D}^{-1}\mathcal{K}$, we establish some lemmas that will prove helpful. We begin by recalling the following result \cite[Corollary 2.1]{estrin16}:
\begin{lemma}
	\label{thm:eg16}
	For matrices $M, N \in \mathbb{R}^{n \times n}$, if $\rank(M) = r$, $\rank(N) = n-r$, and $\rank(M+N)=n$, then $(M+N)^{-1}M$ is a projector of rank $r$.
\end{lemma}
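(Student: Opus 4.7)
The plan is to verify the two defining properties of a projector separately: that $P := (M+N)^{-1}M$ satisfies $P^2 = P$, and that $\rank(P) = r$.

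The rank claim I would handle first, because it is essentially free: since $M+N$ is invertible, left multiplication by $(M+N)^{-1}$ preserves rank, so $\rank(P) = \rank(M) = r$. The remaining work is showing idempotency.

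To show $P^2 = P$, I would reformulate: $P^2 = P$ is equivalent to $M(M+N)^{-1}M = M$, and substituting $M = (M+N) - N$ on the right reduces this further to the single identity $M(M+N)^{-1}N = 0$. The cleanest way to establish this is to show that $\range\!\bigl((M+N)^{-1}N\bigr) \subseteq \ker(M)$. For any $y = (M+N)^{-1}Nx$, we have $My + Ny = Nx$, so $My = N(x-y)$. Thus $My$ lies in $\range(M) \cap \range(N)$, and if this intersection is trivial, then $My = 0$, as desired.

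So the crux of the proof, and the main obstacle, is the geometric fact that the rank hypothesis forces $\range(M) \cap \range(N) = \{0\}$. I would argue this from the inclusion $\range(M+N) \subseteq \range(M) + \range(N)$ together with the dimension formula:
\[
n = \rank(M+N) \le \dim\!\bigl(\range(M)+\range(N)\bigr) = \rank(M) + \rank(N) - \dim\!\bigl(\range(M)\cap\range(N)\bigr),
\]
which, given $\rank(M) + \rank(N) = r + (n-r) = n$, forces $\dim(\range(M)\cap\range(N)) \le 0$. Once this complementarity is in hand, the chain $My \in \range(M) \cap \range(N) = \{0\}$ closes the argument, yielding $M(M+N)^{-1}N = 0$ and hence $P^2 = P$.
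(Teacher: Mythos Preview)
Your argument is correct. The rank claim is immediate from the invertibility of $M+N$, and your reduction of idempotency to $M(M+N)^{-1}N = 0$ via $\range(M)\cap\range(N)=\{0\}$ is clean and complete; the dimension count is exactly the right way to extract the range complementarity from the three rank hypotheses.

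As for comparison: the paper does not actually prove this lemma. It is quoted as a known result from \cite[Corollary~2.1]{estrin16} and used as a black box in the subsequent projector arguments. So your write-up supplies a self-contained justification that the paper omits. If you want to align with how the original reference likely argues, one common alternative is to show directly that $\ker(M)\oplus\ker(N)=\mathbb{R}^n$ (same dimension count applied to kernels), from which $(M+N)^{-1}M$ is seen to act as the identity on $\ker(N)$ and as zero on $\ker(M)$, hence is the projector onto $\ker(N)$ along $\ker(M)$. Your range-based version and this kernel-based version are dual to one another and equally valid.
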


When $B$ is minimally independent, $\rank(A) = n-m_2$ and $\rank(B_2^TW^{-1}B_2) = m_2$. Therefore, we can use \cref{thm:eg16} to establish the following:
\begin{lemma}
	\label{thm:isNullOfA}
	Suppose that $\nulls(A) = m_2$, $B_2$ is a subset of $m_2$ rows of $B$ such that $\begin{bmatrix} A & B_2^T \\ B_2 & 0 \end{bmatrix}$ is invertible, and $W$ is positive definite. Then $(A+B_2^TW^{-1}B_2)^{-1}B_2^T$ is a null matrix of $A$.
\end{lemma}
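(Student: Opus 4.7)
The goal is to show that $A(A+B_2^TW^{-1}B_2)^{-1}B_2^T = 0$, i.e., the columns of $\mathcal{M}^{-1}B_2^T$ (writing $\mathcal{M} := A+B_2^TW^{-1}B_2$) lie in $\ker(A)$. Since \cref{thm:eg16} has just been introduced, the natural plan is to invoke it with $M = A$ and $N = B_2^TW^{-1}B_2$, and then extract the desired containment from the resulting projector identity.

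First I would verify the hypotheses of \cref{thm:eg16}. By assumption $\rank(A) = n - m_2$, and $\rank(B_2^TW^{-1}B_2) = m_2$ because $W$ is positive definite and $B_2$ has full row rank $m_2$ (it is a submatrix of $B$, which has full row rank by the standing assumption of the paper). The one item requiring a small argument is that $\mathcal{M}$ is invertible: both summands are positive semidefinite, so $x^T\mathcal{M}x = 0$ forces $Ax = 0$ and $B_2x = 0$ simultaneously, and this is only possible for $x = 0$ because the subsystem $\begin{bmatrix} A & B_2^T \\ B_2 & 0 \end{bmatrix}$ is invertible (part of the minimal-independence hypothesis).

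With the hypotheses in place, \cref{thm:eg16} yields that $\mathcal{M}^{-1}A$ is a projector of rank $n-m_2$. Its complement $I - \mathcal{M}^{-1}A = \mathcal{M}^{-1}B_2^TW^{-1}B_2$ is then also a projector, with range equal to $\ker(\mathcal{M}^{-1}A) = \ker(A)$ (the last equality uses only that $\mathcal{M}^{-1}$ is invertible). Finally, $\range(\mathcal{M}^{-1}B_2^TW^{-1}B_2) \subseteq \range(\mathcal{M}^{-1}B_2^T)$, and both subspaces have dimension $m_2$ (the first because it is the range of a rank-$m_2$ projector, the second because $\mathcal{M}$ and $W$ are invertible and $B_2^T$ has full column rank $m_2$). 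The two ranges therefore coincide, so $\range(\mathcal{M}^{-1}B_2^T) = \ker(A)$, which is exactly the claim.

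I expect the only mildly subtle step to be the invertibility of $\mathcal{M}$; everything else is bookkeeping once \cref{thm:eg16} has been applied. As a sanity check, one can also prove the result directly from \cref{thm:schur_w} applied to the subsystem (in which the leading block is maximally rank-deficient by construction): that proposition gives $B_2\mathcal{M}^{-1}B_2^T = W$, so $A\mathcal{M}^{-1}B_2^T = (\mathcal{M} - B_2^TW^{-1}B_2)\mathcal{M}^{-1}B_2^T = B_2^T - B_2^TW^{-1}W = 0$, confirming the conclusion.
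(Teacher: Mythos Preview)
Your main argument is correct and follows essentially the paper's route: both apply \cref{thm:eg16} to identify the relevant projector and then finish with a rank/dimension count (the paper works with the transpose $B_2^TW^{-1}B_2\mathcal{M}^{-1}$ after the algebraic split $A\mathcal{M}^{-1}B_2^T = B_2^T - B_2^TW^{-1}B_2\mathcal{M}^{-1}B_2^T$, whereas you use the complementary projector $I-\mathcal{M}^{-1}A$, but these are two sides of the same coin). Your sanity check via \cref{thm:schur_w} is a valid and in fact shorter alternative that the paper does not exploit here.
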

\begin{proof}
	We can write
	\begin{align*}
	A(A+B_2^TW^{-1}B_2)^{-1}B_2^T &= (A+ B_2^TW^{-1}B_2 - B_2^TW^{-1}B_2)(A+B_2^TW^{-1}B_2)^{-1}B_2^T\\
	&= B_2^T - B_2^TW^{-1}B_2(A+B_2^TW^{-1}B_2)^{-1}B_2^T.
	\end{align*}
	By \cref{thm:eg16}, $(A+B_2^TW^{-1}B_2)^{-1}B_2^TW^{-1}B_2$ is a projector of rank $m_2$. Because
	$$
	B_2^TW^{-1}B_2(A+B_2^TW^{-1}B_2)^{-1} = \Big((A+B_2^TW^{-1}B_2)^{-1}B_2^TW^{-1}B_2\Big)^T,
	$$
	$B_2^TW^{-1}B_2(A+B_2^TW^{-1}B_2)^{-1}$ is also a projector of rank $m_2$. Moreover, it is clear that its range is a subset of $\range(B_2^T)$. Because both $B_2^T$ and $B_2^TW^{-1}B_2(A+B_2^TW^{-1}B_2)^{-1}$ have rank $m_2$, we conclude that the projector's range is in fact equal to $\range(B_2^T)$. Therefore,
	\begin{align*}
	A(A+B_2^TW^{-1}B_2)^{-1}B_2^T &= B_2^T - B_2^TW^{-1}B_2(A+B_2^TW^{-1}B_2)^{-1}B_2^T \\
	&=B_2^T - B_2^T \\
	&= 0.
	\end{align*} 
\end{proof}

This lets us make a more specific observation about these projectors.

\begin{lemma}
	\label{thm:pac_1}
	Suppose that $\nulls(A) = m_2$, $B_2$ is a subset of $m_2$ rows of $B$ such that $\begin{bmatrix} A & B_2^T \\ B_2 & 0 \end{bmatrix}$ is invertible, and $W$ is positive definite. Then 
	$$
	A(A+B_2^TW^{-1}B_2)^{-1} =: P_A
	$$
	is a projector onto $\range(A)$ along $\ker(B_2)$.
\end{lemma}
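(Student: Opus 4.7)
The plan is to verify that $P_A$ is idempotent and then pin down its range and kernel separately, feeding on \cref{thm:eg16} and \cref{thm:isNullOfA}. Writing $S := A + B_2^T W^{-1} B_2$, the two preceding lemmas already do the heavy lifting; what remains is essentially algebraic assembly.

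First I would deduce idempotence. Applying \cref{thm:eg16} with $M = A$ (of rank $n - m_2$) and $N = B_2^T W^{-1} B_2$ (of rank $m_2$) gives that $S^{-1}A$ is a projector. Squaring yields $(S^{-1}A)^2 = S^{-1}A$, and left-multiplying by $S$ produces the key identity $AS^{-1}A = A$. Idempotence of $P_A$ then falls out:
\begin{equation*}
P_A^2 \;=\; AS^{-1}\cdot AS^{-1} \;=\; (AS^{-1}A)\,S^{-1} \;=\; AS^{-1} \;=\; P_A.
\end{equation*}

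Next I would identify the range and kernel. The inclusion $\range(P_A) \subseteq \range(A)$ is immediate from the factorization $P_A v = A(S^{-1}v)$; for the reverse inclusion, any $Au \in \range(A)$ equals $P_A(Su) = AS^{-1}Su$, so the two coincide. For the kernel, \cref{thm:isNullOfA} gives exactly what is needed: $AS^{-1}B_2^T = 0$, i.e., $P_A B_2^T = 0$, so $\range(B_2^T) \subseteq \ker(P_A)$. A dimension count closes the gap, since $\rank(P_A) = \rank(A) = n - m_2$ forces $\dim \ker(P_A) = m_2 = \dim \range(B_2^T)$, hence equality. Minimal independence then guarantees $\range(A) \cap \range(B_2^T) = \{0\}$ (otherwise $\mathcal{K}$ would be singular), yielding the direct-sum decomposition $\mathbb{R}^n = \range(A) \oplus \ker(P_A)$ that the projector characterization requires.

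There is no serious obstacle once \cref{thm:eg16} and \cref{thm:isNullOfA} are in hand; the argument is mostly bookkeeping around ranks and dimensions. The one subtle point is noticing that the complementary subspace to $\range(A)$ is read off directly from $B_2^T$ via the previous lemma, with no further manipulation of $A$ or $W$ required.
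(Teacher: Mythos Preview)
Your proof is correct and follows essentially the same route as the paper's: both invoke \cref{thm:eg16} to establish that $P_A$ is a rank $n-m_2$ projector, identify $\range(P_A)=\range(A)$ from the factorization, and pin down the kernel via \cref{thm:isNullOfA} together with a dimension count. The only cosmetic difference is that the paper observes $P_A^T = S^{-1}A$ is a projector by \cref{thm:eg16} directly (using symmetry of $S$) and transposes, whereas you extract $AS^{-1}A=A$ and verify $P_A^2=P_A$ by hand; your closing paragraph on the direct-sum decomposition is also superfluous, since that is automatic once $P_A$ is known to be idempotent.
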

\begin{proof}
	Because \cref{thm:eg16} tells us that $P_A^T$ is a projector of rank $n-m_2$, we deduce that $P_A$ is as well. Notice that $P_A$ is equal to $A$ multiplied by a full-rank matrix, which means that  $\range(P_A) = \range(A)$.
	
	Because $\nulls(P_A) = m_2$, showing that the projection is along $\ker(B_2)$ is equivalent to showing that $P_AB_2^T = 0$; this follows from \cref{thm:isNullOfA}.
\end{proof}

With this last result established, we can state our main result for this subsection.

\begin{theorem}
	Suppose that $\nulls(A) = m_2$, with $B$ minimally independent and partitioned as in \cref{eqn:minind_split}, and let $W$ be positive definite. The preconditioned matrix $\mathcal{P}_{3,D}^{-1}\mathcal{K}$ has eigenvalues $\lambda = -1, 1, 2$,  with respective geometric multiplicities $m_1 + m_2$, $n-m_1$, and $m_1$.
\end{theorem}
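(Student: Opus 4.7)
The plan is to set up the generalized eigenvalue problem $\mathcal{K}v = \lambda \mathcal{P}_{3,D} v$ with $v = (x, y_1, y_2)$ and to exhibit linearly independent eigenvector families for $\lambda \in \{-1, 1, 2\}$ whose dimensions sum to $n+m$. Writing $M := A + B_2^T W^{-1} B_2$, the block equations are
\begin{align*}
Ax + B_1^T y_1 + B_2^T y_2 &= \lambda M x, \\
B_1 x &= \tfrac{\lambda}{2}\, B_1 M^{-1} B_1^T y_1, \\
B_2 x &= \lambda W y_2.
\end{align*}
Throughout I use two standing facts: $B_1$ inherits full row rank from $B$, so $B_1^T$ is injective; and the rows of $B_1$ lie in $\range(A) = \ker(A)^\perp$, so $B_1$ annihilates $\ker(A)$. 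For $\lambda = 1$, trying $v = (x, 0, W^{-1} B_2 x)$ satisfies the first and third equations immediately, while the second reduces to $B_1 x = 0$; this gives an $(n - m_1)$-dimensional eigenspace parametrized by $\ker(B_1)$.

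For $\lambda = -1$, the third equation forces $y_2 = -W^{-1} B_2 x$ and the first reduces to $B_1^T y_1 = -2Ax$, which determines $y_1$ uniquely and constrains $x$ to $S := \{x : Ax \in \range(B_1^T)\}$. A rank--nullity count, using $\range(B_1^T) \subseteq \range(A)$ and $\dim \ker(A) = m_2$, gives $\dim S = m_1 + m_2$. Substituting into the second equation yields $B_1 x = B_1 M^{-1} A x$; by the transpose of \cref{thm:pac_1}, $M^{-1}A$ is a projector whose complement $I - M^{-1}A$ maps into $\ker(A)$, and since $B_1$ annihilates $\ker(A)$ this equation holds automatically.

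For $\lambda = 2$, I would try the ansatz $v = (M^{-1} B_1^T y_1,\, y_1,\, 0)$ for arbitrary $y_1 \in \mathbb{R}^{m_1}$. The second equation holds by construction. The third requires $B_2 M^{-1} B_1^T y_1 = 0$, which follows by transposing \cref{thm:isNullOfA} (giving $B_2 M^{-1} A = 0$) and using $B_1^T y_1 \in \range(A)$. The first equation reduces to $Ax = B_1^T y_1$: since $AM^{-1}$ is, by \cref{thm:pac_1}, the projector onto $\range(A)$ and $B_1^T y_1 \in \range(A)$, we get $Ax = AM^{-1} B_1^T y_1 = B_1^T y_1$. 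This yields $m_1$ linearly independent eigenvectors.

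The totals $(n - m_1) + (m_1 + m_2) + m_1 = n + m$ exhaust the spectrum, and since eigenvectors for distinct eigenvalues are independent the three families combine to a basis. I expect the $\lambda = 2$ case to be the main obstacle: the correct ansatz is not obvious, and its verification hinges on the interaction between $\range(B_1^T) \subseteq \range(A)$ and the identity $B_2 M^{-1} A = 0$, which together force both $y_2 = 0$ and consistency of the leading-block equation.
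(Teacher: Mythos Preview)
Your proof is correct and takes a genuinely different route from the paper's. The paper eliminates $y_1$ and $y_2$ from the generalized eigenvalue equations to obtain a single quadratic equation in $\lambda$ acting on $x$, namely
\[
\lambda^2 x - \lambda P_1 x - \bigl((I-P_1) + 2P_2\bigr)x = 0,
\]
where $P_1 = M^{-1}A$ and $P_2 = M^{-1}B_1^T(B_1 M^{-1}B_1^T)^{-1}B_1$; it then analyzes how these projectors act on three complementary subspaces (namely $\range(M^{-1}B_1^T)$, $\ker(A)$, and $\ker((I-P_1)+2P_2)$), factoring the quadratic differently on each. You instead exhibit explicit eigenvector families for each eigenvalue and count dimensions directly. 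Your approach is more elementary and constructive; in particular, your identification of the full $\lambda=-1$ eigenspace as $\{x: Ax\in\range(B_1^T)\}$ in one stroke is cleaner than the paper's, which assembles it from two separate $m_1$- and $m_2$-dimensional contributions. The paper's projector reduction, on the other hand, makes it transparent why the scaling factor $\tfrac12$ in the $(2,2)$-block is special (other scalings change the constant term of the quadratic on $\range(M^{-1}B_1^T)$ and produce four eigenvalues), a point it exploits in the subsequent remark.
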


\begin{proof}
	Define $\tilde{A}_W := A+B_2^TW^{-1}B_2$ and $S_B := B_1\tilde{A}_W^{-1}B_1^T$. The eigenvalues of $\mathcal{P}_{3,D}^{-1}\mathcal{K}$ are given by the generalized eigenvalue problem:
	\begin{subequations}
		\label{eqn:eig_p3d}
		\begin{alignat}{2}
		\label{eqn:eig_p3d_x}
		&Ax + B_1^Ty_1 + B_2^Ty_2 &&= \lambda \tilde{A}_W x; \\
		\label{eqn:eig_p3d_y1}
		&B_1x &&= \frac{\lambda}{2} S_{B} y_1; \\
		\label{eqn:eig_p3d_y2}
		&B_2x &&= \lambda W y_2.
		\end{alignat}
	\end{subequations}
	Obtaining $y_1 = \frac{2}{\lambda}S_{B}^{-1}B_1x$ from \cref{eqn:eig_p3d_y1}, $y_2 = \frac{1}{\lambda} W^{-1}B_2x$ from \cref{eqn:eig_p3d_y2}, substituting these values into \cref{eqn:eig_p3d_x}, and multiplying both sides from the left by $\tilde{A}_W^{-1}$ gives:
	\begin{equation*}
	\tilde{A}_W^{-1}Ax + \frac{2}{\lambda}\tilde{A}_W^{-1}B_1^T S_{B}^{-1}B_1x + \frac{1}{\lambda} \tilde{A}_W^{-1}B_2^T W ^{-1} B_2x = \lambda x.
	\end{equation*}
	Because $\tilde{A}_W^{-1}(A+B_2^TW^{-1}B_2) = I$, we replace $\tilde{A}_W^{-1}B_2^T W ^{-1} B_2$ by $I - \tilde{A}_W^{-1}A$. We then multiply all terms by $\lambda$ and rearrange to give:
	\begin{equation}
	\label{eqn:eig_proj}
	\lambda^2 x - \lambda \underbrace{\tilde{A}_W^{-1}A}_{=:\ P_1}x - \Big( \underbrace{(I- \tilde{A}_W^{-1}A)}_{=\ I-P_1} + 2\underbrace{\tilde{A}_W^{-1}B_1^T S_{B}^{-1} B_1}_{=:\ P_2}\Big)x = 0.
	\end{equation}
	\cref{thm:eg16} shows that $P_1$ is a projector. Consequently, $I-P_1$ is as well. Note also that $P_2=\tilde{A}_W^{-1}B_1^T S_B^{-1} B_1$ is a projector onto the range of $\tilde{A}_W^{-1}B_1^T$. We can now show the desired geometric multiplicities by considering $x$ in the ranges (or null spaces) of the projectors in \cref{eqn:eig_proj}.
	
	First, consider $x \in \text{range}(\tilde{A}_W^{-1}B_1^T)$. Clearly, $P_2 x = x$. And because all columns of $B_1^T$ are in $\range(A)$,
	\begin{equation}
	\label{eqn:rp2_rp1}
	\range(P_2) = \range(\tilde{A}_W^{-1}B_1^T) \subseteq \range(\tilde{A}_W^{-1}A) = \range(P_1).
	\end{equation} 
	Thus, $P_1x = x$ and $(I - P_1)x =0$, so \cref{eqn:eig_proj} becomes $\lambda^2 x - \lambda x -2x =0$. This yields the eigenvalues $-1$ and $2$, each with geometric multiplicity $m_1$.
	
	Next, consider $x \in \ker(A)$. Clearly, $P_1x = 0$ and $(I-P_1)x=x$. Notice that because the rows of $B_1$ are in the row space of $A$, we can write $B_1 = QA$ for $Q \in \mathbb{R}^{m_1 \times n}$. Thus,
	\begin{equation}
	\label{eqn:kerab1}
	\ker(A) \subseteq \ker(B_1),
	\end{equation}
	so $P_2x = 0$. \Cref{eqn:eig_proj} then becomes $\lambda^2 x - x =0$, which yields $\lambda = \pm 1$, each with geometric multiplicity $m_2$. 
	
	Lastly, let us consider the term $\Big((I-P_1) + P_2\Big)$. Notice that
	$$
	\range(I-P_1) = \ker(P_1) \subseteq \ker(P_2),
	$$
	where the second relation holds because of \cref{eqn:kerab1}. Thus, the range of $(I-P_1)$ does not overlap with that of $P_2$, so
	$$
	\rank\Big((I-P_1)+2P_2\Big) = \rank(I-P_1) + \rank(P_2) = m_2+m_1.
	$$
	Another consequence of the lack of overlap is that $\Big((I-P_1)+2P_2\Big)x = 0$ if and only if $(I-P_1)x = 0$ and $P_2x = 0$. Therefore, any $x \in \ker\Big((I-P_1)+2P_2\Big)$ also satisfies $P_1x = x$, which means that \cref{eqn:eig_proj} becomes $\lambda^2 x - \lambda x = 0$. Because $\mathcal{K}$ is nonsingular, $\lambda=0$ cannot be an eigenvalue; we therefore have $n-m_1-m_2$ additional eigenvectors corresponding to $\lambda = 1$.
	
	We have now accounted for all $n+m_1+m_2$ eigenvectors, which completes the proof.
\end{proof}

\begin{remark}
	We saw that, by scaling the $(2,2)$-block of $\mathcal{P}_{3,D}$ by $\frac{1}{2}$, we obtain a preconditioned operator with three eigenvalues. With minor modifications to the previous analysis, it is easy to show that other scaling factors yield four eigenvalues. In terms of the number of eigenvalues, there is no benefit in scaling the $(1,1)$- or $(3,3)$-blocks.
\end{remark}

\section{Block $3 \times 3$ inverse formulas for the saddle point matrix}
\label{sec:inv}
Estrin and Greif \cite{estrin15} showed that when $A$ is maximally rank-deficient, the inverse of $\mathcal{K}$ has some surprising properties. They then exploited these to develop a block triangular approximate inverse preconditioner. In this section we show that analogous properties hold when $B$ is minimally independent; indeed, recalling that maximal rank-deficiency of $A$ is a special case of minimal independence of $B$, many of the results in \cite{estrin15} can be re-derived as corollaries of the results presented here. Later (\cref{sec:prec_gen}), we derive a block triangular preconditioner based on these insights. While the resulting preconditioner is more expensive than the block diagonal preconditioners of \cref{sec:prec_diag}, it is more generalizable: it remains ideal for systems in which $A$ is not maximally rank-deficient and $B$ is not minimally independent.

We begin from two known expressions for the inverse of the standard $2 \times 2$ saddle point system,
$$
K_2 = \begin{bmatrix}
\mathcal{A} & \mathcal{B}^T \\
\mathcal{B} & 0
\end{bmatrix}.
$$
When $\mathcal{A}$ is invertible, we can write (see \cite[equation (3.4)]{benzi05})
\begin{equation}
\label{eqn:posdefA_inv}
K_2^{-1} = \begin{bmatrix} \mathcal{A}^{-1} - \mathcal{A}^{-1}\mathcal{B}^T S_{\mathcal{B}}^{-1} \mathcal{B} \mathcal{A}^{-1} & \mathcal{A}^{-1}\mathcal{B}^T S_{\mathcal{B}}^{-1} \\
S_{\mathcal{B}}^{-1}\mathcal{B}\mathcal{A}^{-1} & -S_{\mathcal{B}}^{-1}
\end{bmatrix},
\end{equation}
where $S_\mathcal{B} = \mathcal{B}\mathcal{A}^{-1}\mathcal{B}^T$. A second formula \cite[equation (3.8)]{benzi05} does not require $\mathcal{A}$ to be invertible, and instead makes use of a matrix $Z_{\mathcal{B}} \in \mathbb{R}^{n \times (n-m)}$ whose columns form a basis for the null space of $\mathcal{B}$:
\begin{equation}
\label{eqn:zaz_inv}
K_2^{-1} = \begin{bmatrix}
V_{\mathcal{B}} & (I-V_{\mathcal{B}}\mathcal{A}) \mathcal{B}^T (\mathcal{B}\mathcal{B}^T)^{-1} \\
(\mathcal{B}\mathcal{B}^T)^{-1}\mathcal{B}(I-\mathcal{A}V_{\mathcal{B}}) & -(\mathcal{B}\mathcal{B}^T)^{-1}\mathcal{B}(\mathcal{A}-\mathcal{A}V_{\mathcal{B}}\mathcal{A})\mathcal{B}^T(\mathcal{B}\mathcal{B}^T)^{-1}
\end{bmatrix},
\end{equation}
where $V_{\mathcal{B}} = Z_{\mathcal{B}}(Z_{\mathcal{B}}^T\mathcal{A}Z_{\mathcal{B}})^{-1}Z_{\mathcal{B}}^T$.

\subsection{Null-$B_2$ inverse formula}
\label{sec:inv_nb2}
We now return our attention to the system $\mathcal{K}$. Assume that $B$ is minimally independent and split as in \cref{eqn:minind_split}. We then partition $\mathcal{K}$ as
$$
\mathcal{K} := \left[\begin{array}{c c | c}
A & B_1^T & B_2^T \\
B_1 & 0 & 0 \\
\hline
B_2 & 0 & 0
\end{array}\right],
$$
and treat it as a $2 \times 2$ block system with a singular leading block. We apply formula \cref{eqn:zaz_inv} with
$$
\mathcal{A} = \begin{bmatrix}
A & B_1^T \\
B_1 & 0
\end{bmatrix}, \ \ \ \ \ \mathcal{B} = \begin{bmatrix}
B_2 & 0
\end{bmatrix}.
$$
For the null matrix $Z_{\mathcal{B}}$ of $\mathcal{B}$, we use
$$
Z_{\mathcal{B}} = \begin{bmatrix}
Z_{B2} & 0 \\
0 & I
\end{bmatrix},
$$
where $Z_{B2} \in \mathbb{R}^{n \times (n-m_2)}$ is a matrix whose columns form a basis for the null space of $B_2$ and $I$ denotes the identity matrix of size $m_1$. Computing $V_{\mathcal{B}}$ requires the inverse of
$$
Z_{\mathcal{B}}^T\mathcal{A}Z_{\mathcal{B}} = \begin{bmatrix}
Z_{B2}^T AZ_{B2} & Z_{B2}^T B_1^T \\
B_1Z_{B2} & 0
\end{bmatrix}.
$$
Because the rows of $B_2$ are outside the row space of $A$, the matrix $\begin{bmatrix}
A & B_2^T \\
B_2 & 0
\end{bmatrix}$ is nonsingular. This means that $Z_{B2}^T AZ_{B2}$ is positive definite \cite[p. 20]{benzi05}, and that $Z_{\mathcal{B}}^T\mathcal{A}Z_{\mathcal{B}}$ is a saddle point system with a positive definite leading block. Thus, we can compute its inverse using \cref{eqn:posdefA_inv} and substitute this back into formula \cref{eqn:zaz_inv} to obtain
\begin{subequations}
	\label{eqn:Kinv}
	\begin{align}
	\mathcal{K}^{-1} = \left[\begin{array}{c c c}
	X_1 & X_2^T & X_3^T \\
	X_2 & X_4 & X_5^T \\
	X_3 & X_5 & X_6
	\end{array}
	\right],
	\end{align}
	\begin{align}
	\nonumber
	\text{with}&\\
	X_1 &:= V - V B_1^T S_{V}^{-1}B_1 V \\
	X_2 &:= S_{V}^{-1}B_1 V\\
	\label{eqn:x3}
	X_3 &:= (B_2B_2^T)^{-1}B_2\bigg( I - AV + AV B_1^TS_V^{-1}B_1 V - B_1^TS_V^{-1}B_1 V \bigg)\\
	X_4 &:= -S_{V}^{-1}\\
	\label{eqn:x5}
	X_5 &:= (B_2 B_2^T)^{-1}B_2(I-AV)B_1^TS_{V}^{-1} \\
	\label{eqn:x6}
	X_6 &:= -(B_2 B_2^T)^{-1}B_2\bigg( A + B_1^TS_{V}^{-1}B_1 -AV A - A V B_1^TS_{V}^{-1}B_1 - \\
	\nonumber
	 &\ \ \ \ \ \ B_1^TS_{V}^{-1}B_1 V A +A V B_1^TS_{V}^{-1}B_1 V A \bigg) B_2^T (B_2B_2^T)^{-1},
	\end{align}
\end{subequations}
where $V := Z_{B2}(Z_{B2}^T A Z_{B2})^{-1}Z_{B2}^T$ and $S_{V} := B_1V B_1^T$.

\Cref{eqn:Kinv} holds for any symmetric nonsingular $\mathcal{K}$ such that $\begin{bmatrix}
A & B_2^T \\ B_2 & 0
\end{bmatrix}$ is invertible. But fortunately for us, the assumption that $B$ is minimally independent lets us simplify some of the uglier terms. We begin with another lemma featuring the projection matrix $P_A$ of \cref{thm:pac_1}.

\begin{lemma}
	\label{thm:pac_2}
	If $\nulls(A) = m_2$ and $B_2$ is a subset of $m_2$ rows of $B$ such that $\begin{bmatrix}
	A & B_2^T \\ B_2 & 0
	\end{bmatrix}$ is invertible, then $AV = P_A$, where $P_A$ is a projector onto $\range(A)$ along $\ker(B_2)$.
\end{lemma}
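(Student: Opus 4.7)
The plan is to verify that $AV$ satisfies the characterization of $P_A$ established (implicitly) in \cref{thm:pac_1}: it is a projector of rank $n-m_2$ whose range equals $\range(A)$ and whose kernel contains $\range(B_2^T)$. Since a projector is determined by its range and kernel, matching these pins down $AV = P_A$.

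First I would establish idempotency via a telescoping cancellation: because $Z_{B_2}^T A Z_{B_2}$ sits between two copies of its inverse,
\[
VAV = Z_{B_2}(Z_{B_2}^T A Z_{B_2})^{-1} Z_{B_2}^T A Z_{B_2} (Z_{B_2}^T A Z_{B_2})^{-1} Z_{B_2}^T = V,
\]
so $(AV)^2 = A(VAV) = AV$. The annihilation condition is equally direct: $B_2 Z_{B_2} = 0$ gives $V B_2^T = 0$, and therefore $AV B_2^T = 0$, placing $\range(B_2^T)$ inside $\ker(AV)$.

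To identify the range, I would use $\range(V) = \range(Z_{B_2}) = \ker(B_2)$, so $\range(AV) = A\ker(B_2)$. Nonsingularity of $\mathcal{K}$ forces $\ker(A)\cap\ker(B_2) = \{0\}$, which makes $A|_{\ker(B_2)}$ injective; since $\dim \ker(B_2) = n-m_2 = \rank(A)$, its image is an $(n-m_2)$-dimensional subspace of $\range(A)$, hence all of $\range(A)$. Thus $\range(AV) = \range(A)$, the rank of $AV$ is $n-m_2$, and the inclusion $\range(B_2^T) \subseteq \ker(AV)$ becomes equality by a dimension count ($\dim \range(B_2^T) = m_2$). This matches the range and kernel of $P_A$ from \cref{thm:pac_1}, so $AV = P_A$. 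I do not anticipate a genuine obstacle; the only things to watch are that the $VAV$ telescoping is executed cleanly and that the rank and injectivity bookkeeping is correctly tied to the nonsingularity of $\mathcal{K}$.
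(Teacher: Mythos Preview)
Your argument is correct and arrives at the same conclusion as the paper, with the range identification done the same way in both: $\range(AV)=A\ker(B_2)$ is an $(n-m_2)$-dimensional subspace of $\range(A)$ because $\ker(A)\cap\ker(B_2)=\{0\}$, hence equals $\range(A)$. The only real difference is in how the projector property is established. The paper reads $AV = (AZ_{B_2})(Z_{B_2}^T A Z_{B_2})^{-1}Z_{B_2}^T$ as an instance of the standard oblique-projector form $X(Y^TX)^{-1}Y^T$ and immediately concludes that $AV$ projects onto $\range(AZ_{B_2})$ with kernel $\ker(Z_{B_2}^T)=\range(B_2^T)$; it then only needs to argue $\range(AZ_{B_2})=\range(A)$. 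You instead verify idempotency by the explicit telescoping $VAV=V$ and check the kernel condition $AVB_2^T=0$ by hand. Your route is slightly longer but more self-contained, since it does not presuppose familiarity with the $X(Y^TX)^{-1}Y^T$ formula; the paper's route is terser but leans on that structural fact.
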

\begin{proof}
	By writing 
	$$
	AV = AZ_{B2}(Z_{B2}^T A Z_{B2})^{-1}Z_{B2}^T,
	$$
	we see that $AV$ is a projector onto $\range(AZ_{B2})$ along $\ker(B_2)$. It is clear that $\range(AZ_{B2}) \subseteq \range(A)$; moreover, $A$ has rank $n-m_2$ and $AZ_{B2}$ has rank $n-m_2$. Because the range of $AZ_{B2}$ is a subset of the range of $A$, and both ranges have the same dimension, we conclude that $\range(AZ_{B2}) = \range(A)$.
\end{proof}
\begin{theorem}
	\label{thm:zeroblks}
	Let $\mathcal{K}$ have minimally independent off-diagonal blocks, with $B$ partitioned as in \cref{eqn:minind_split}. Then the (2,3)-, (3,3)- and (3,2)-blocks of $\mathcal{K}^{-1}$ are equal to zero, i.e.:
	$$
	\mathcal{K}^{-1} = \begin{bmatrix}
	\times & \times & \times \\
	\times & \times & 0 \\
	\times & 0 & 0
	\end{bmatrix}.
	$$
\end{theorem}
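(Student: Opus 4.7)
The plan is to do essentially everything with one identity derived from \Cref{thm:pac_2}. That lemma tells us $AV = P_A$ is the projector onto $\range(A)$ along $\ker(B_2)$, so the complementary projector $I - AV$ has range contained in $\ker(B_2)$. This gives the key one-liner
\begin{equation*}
B_2(I - AV) = 0,
\end{equation*}
which I would state and prove as a short preliminary observation before touching any of the block expressions in \cref{eqn:Kinv}.

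With this identity in hand, the (3,2)-block is immediate: inspecting \cref{eqn:x5}, the matrix $X_5$ has $B_2(I-AV)$ sitting as its leftmost factor, so $X_5 = 0$. The (2,3)-block is just $X_5^T$ by the symmetry of $\mathcal{K}^{-1}$, and therefore also vanishes — so there is no separate argument required for it.

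For the (3,3)-block I would show that the long bracketed expression inside $X_6$ (\cref{eqn:x6}) factors cleanly as $(I - AV)[\,\cdot\,]$, which then annihilates after multiplication on the left by $B_2$. Concretely, the six inner terms split into three pairs,
\begin{equation*}
(A - AVA)\ +\ (I-AV)B_1^T S_V^{-1} B_1\ -\ (I-AV)B_1^T S_V^{-1} B_1\,VA,
\end{equation*}
each of which has $(I - AV)$ as a left factor, giving the compact rewrite
\begin{equation*}
(I-AV)\bigl[A + B_1^T S_V^{-1} B_1 (I - VA)\bigr].
\end{equation*}
Sandwiching by $(B_2B_2^T)^{-1}B_2$ on the left and $B_2^T(B_2B_2^T)^{-1}$ on the right kills the whole product because of the $B_2(I-AV)=0$ identity; the right factor becomes irrelevant.

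The only mild obstacle is bookkeeping in the six-term regrouping of $X_6$ — it is routine but easy to mis-sign. Nothing beyond \cref{thm:pac_2} and the symmetry of $\mathcal{K}^{-1}$ should be required.
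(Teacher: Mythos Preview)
Your key identity $B_2(I-AV)=0$ is false, and the whole argument collapses with it. \Cref{thm:pac_2} says $AV=P_A$ projects onto $\range(A)$, so the complementary projector $I-AV$ has \emph{kernel} equal to $\range(A)$ and \emph{range} equal to $\ker(P_A)$. But the proof of \cref{thm:pac_1} identifies $\ker(P_A)$ by showing $P_AB_2^T=0$, i.e.\ $\ker(P_A)=\range(B_2^T)$, not $\ker(B_2)$; the phrase ``along $\ker(B_2)$'' in the lemma statements is loose, and the dimension count alone ($\dim\range(A)+\dim\ker(B_2)=2(n-m_2)\ne n$ in general) should warn you off reading it literally. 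Since $\range(I-AV)=\range(B_2^T)$ and $B_2B_2^T$ is invertible, $B_2(I-AV)$ has rank $m_2$ and is certainly nonzero. A quick sanity check: if your identity held, then by \cref{thm:x3} the $(3,1)$-block $X_3=(B_2B_2^T)^{-1}B_2(I-AV)$ would also vanish, making the entire third block row of $\mathcal{K}^{-1}$ zero and contradicting invertibility.

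A second red flag is that your argument never invokes the minimal-independence hypothesis that the rows of $B_1$ lie in the row space of $A$; the theorem is not true without it. The paper's proof uses exactly this: since $\range(B_1^T)\subseteq\range(A)=\ker(I-AV)$, one gets $(I-AV)B_1^T=0$, which kills $X_5$ directly and (together with $AVA=A$) collapses the bracket in $X_6$. Your factor-on-the-left idea can be salvaged by switching to this factor-on-the-right identity.
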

\begin{proof}
	We begin with the (2,3)-block of $\mathcal{K}^{-1}$, denoted by $X_5$ (eq. \cref{eqn:x5}).
	Because the columns of $B_1^T$ are in the range of $A$, \cref{thm:pac_2} implies that $AV B_1^T = B_1^T$, which gives $(I-AV)B_1^T=0$. Hence, $X_5 =0$.
	
	Now consider the (3,3)-block, $X_6$ (eq. \cref{eqn:x6}). Using $AVA = A$ and $AVB_1^T = B_1^T$, we find that the middle multiplicative term of $X_6$ (between the large parentheses) simplifies to zero, giving $X_6 = 0$.
\end{proof}

We can also simplify the $(3,1)$-block.
\begin{proposition}
	\label{thm:x3}
	Let $\mathcal{K}$ be nonsingular with minimally independent off-diagonal blocks, with $B$ partitioned as in \cref{eqn:minind_split}. Then the (1,3)- and (3,1)-blocks of $\mathcal{K}^{-1}$ can be simplified by writing
	$$
	X_3 = (B_2 B_2^T)^{-1}B_2(I-AV).
	$$
\end{proposition}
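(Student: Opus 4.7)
The plan is to start from the given expression
\[
X_3 = (B_2B_2^T)^{-1}B_2\bigl( I - AV + AV B_1^TS_V^{-1}B_1 V - B_1^TS_V^{-1}B_1 V \bigr)
\]
and factor the last two terms. First I would group them as
\[
AV B_1^TS_V^{-1}B_1 V - B_1^TS_V^{-1}B_1 V = (AV-I)B_1^T \cdot S_V^{-1}B_1 V,
\]
so the claim reduces to showing that $(AV-I)B_1^T = 0$, since then the trailing contribution to $X_3$ vanishes and only $(B_2B_2^T)^{-1}B_2(I-AV)$ survives.

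The key step is exactly the observation already used in the proof of \cref{thm:zeroblks}: by \cref{thm:pac_2}, $AV = P_A$ is a projector onto $\range(A)$, and under the minimal independence assumption the columns of $B_1^T$ lie in $\range(A)$, so $P_A B_1^T = B_1^T$, i.e.\ $AV B_1^T = B_1^T$. Substituting this into the factored expression gives $(AV-I)B_1^T = 0$, which immediately eliminates the second half of $X_3$.

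There is no real obstacle here; the argument is just a matter of recognizing that the same identity $(I-AV)B_1^T = 0$ that killed $X_5$ (and the middle term of $X_6$) also collapses the $B_1$-dependent portion of $X_3$. Once that is noted, I would conclude simply by writing
\[
X_3 = (B_2B_2^T)^{-1}B_2(I-AV) - (B_2B_2^T)^{-1}B_2(I-AV)B_1^T S_V^{-1}B_1V = (B_2B_2^T)^{-1}B_2(I-AV),
\]
completing the proof. By symmetry of $\mathcal{K}^{-1}$, the analogous simplification for the $(1,3)$-block follows at once.
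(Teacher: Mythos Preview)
Your argument is correct and matches the paper's approach: both reduce the simplification of $X_3$ to the single identity $AVB_1^T = B_1^T$, already established in the proof of \cref{thm:zeroblks}, which kills the $B_1$-dependent terms in \cref{eqn:x3}. The paper states this in one sentence, while you spell out the factoring explicitly, but the content is the same.
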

\begin{proof}
The result follows from the expression for $X_3$ in \cref{eqn:x3} and the observation in the proof of \cref{thm:zeroblks} that $AV B_1^T = B_1^T$.
\end{proof}

Incorporating the results of \cref{thm:zeroblks} and \cref{thm:x3} into the original inverse formula \cref{eqn:Kinv}, we obtain the following simplified formula when $\mathcal{K}$ has minimally independent off-diagonal blocks:
\begin{equation}
\label{eqn:Kinv_nullB2}
\mathcal{K}^{-1} = \begin{bmatrix}
V - V B_1^T S_{V}^{-1}B_1 V & V B_1^TS_{V}^{-1} & (I-V A)B_2^T (B_2B_2^T)^{-1} \\
S_{V}^{-1}B_1V & -S_{V}^{-1} & 0 \\
(B_2B_2^T)^{-1}B_2(I-AV) & 0 & 0
\end{bmatrix}.
\end{equation}
Because this expression uses a null space of $B_2$ in computing $V$, we refer to it as the \textit{null-$B_2$ inverse formula}.

\subsection{Null-$A$ inverse formula}
\label{sec:inv_na}
The zero blocks in the inverse of $\mathcal{K}$ let us derive a simpler expression for $\mathcal{K}^{-1}$. We recall the following result \cite[Proposition 2.1]{golub03}:

\begin{proposition}
	\label{thm:aug}
	Define
	$$
	K_2 = \begin{bmatrix}
	\mathcal{A} & \mathcal{B}^T \\
	\mathcal{B} & 0
	\end{bmatrix} \ \ \ \textrm{and} \ \ \ 
	K_2(W) = \begin{bmatrix}
	\mathcal{A} + \mathcal{B}^TW^{-1}\mathcal{B} & \mathcal{B}^T \\
	\mathcal{B} & 0
	\end{bmatrix},
	$$
	where $\mathcal{A} \in \mathbb{R}^{n \times n}$, $\mathcal{B} \in \mathbb{R}^{m \times n}$, and $W \in \mathbb{R}^{m \times m}$. If $K_2$ and $K_2(W)$ are both invertible, then
	$$
	(K_2(W))^{-1} = K_2^{-1} - \begin{bmatrix}
	0 & 0 \\
	0 & W^{-1}
	\end{bmatrix}.
	$$
\end{proposition}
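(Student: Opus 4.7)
The plan is to prove the formula by comparing the solutions of the two saddle point systems applied to the same right-hand side. Fix an arbitrary vector $\begin{bmatrix} f \\ g \end{bmatrix}$ and let $\begin{bmatrix} u \\ v \end{bmatrix}$ be the unique solution of $K_2(W) \begin{bmatrix} u \\ v \end{bmatrix} = \begin{bmatrix} f \\ g \end{bmatrix}$, whose existence follows from the hypothesis that $K_2(W)$ is invertible. Written block-wise, the system reads
$$
\mathcal{A} u + \mathcal{B}^T W^{-1} \mathcal{B}\, u + \mathcal{B}^T v = f, \qquad \mathcal{B} u = g.
$$

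The key step is to absorb the augmentation term into the dual variable. By the constraint equation, $\mathcal{B}^T W^{-1} \mathcal{B}\, u = \mathcal{B}^T W^{-1} g$, so introducing $v' := v + W^{-1} g$ converts the first block equation into $\mathcal{A} u + \mathcal{B}^T v' = f$. Hence $(u, v')$ solves the unaugmented system $K_2 \begin{bmatrix} u \\ v' \end{bmatrix} = \begin{bmatrix} f \\ g \end{bmatrix}$, and invertibility of $K_2$ gives $\begin{bmatrix} u \\ v' \end{bmatrix} = K_2^{-1} \begin{bmatrix} f \\ g \end{bmatrix}$. Undoing the substitution yields
$$
\begin{bmatrix} u \\ v \end{bmatrix} = K_2^{-1} \begin{bmatrix} f \\ g \end{bmatrix} - \begin{bmatrix} 0 \\ W^{-1} g \end{bmatrix} = \left(K_2^{-1} - \begin{bmatrix} 0 & 0 \\ 0 & W^{-1} \end{bmatrix}\right) \begin{bmatrix} f \\ g \end{bmatrix},
$$
and since $\begin{bmatrix} f \\ g \end{bmatrix}$ was arbitrary, the operator identity follows.

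There is no real obstacle: once one spots the substitution $v \mapsto v + W^{-1} g$, which is natural because the only difference between $K_2$ and $K_2(W)$ lives in the $(1,1)$ block and the constraint $\mathcal{B} u = g$ lets us replace $\mathcal{B} u$ by $g$ there, the proof is essentially one line. An alternative route would be to verify directly that $K_2(W)\left(K_2^{-1} - \begin{bmatrix} 0 & 0 \\ 0 & W^{-1} \end{bmatrix}\right) = I$, using the identities $\mathcal{B} X_{11} = 0$ and $\mathcal{B} X_{12} = I$ extracted from the bottom block row of $K_2 K_2^{-1} = I$ (where $X_{ij}$ are the blocks of $K_2^{-1}$). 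The substitution argument is cleaner because it avoids unpacking $K_2^{-1}$ block by block.
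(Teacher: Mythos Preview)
Your proof is correct. The paper does not actually prove this proposition; it merely recalls it from \cite[Proposition 2.1]{golub03}, so there is no in-paper argument to compare against. Your substitution $v' = v + W^{-1}g$, enabled by the constraint $\mathcal{B}u = g$, is precisely the standard trick used to establish this identity, and the alternative verification you sketch (multiplying out and using the block identities from $K_2 K_2^{-1} = I$) is equally valid.
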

Therefore, when we augment the leading block of $\mathcal{K}$ with $B_2$, we obtain
\begin{equation}
\label{eqn:k_aug_inv}
\underbrace{\begin{bmatrix}
A+B_2^TW^{-1}B_2 & B_1^T & B_2^T \\
B_1 & 0 & 0 \\
B_2 & 0 & 0
\end{bmatrix}}_{=:\ \mathcal{K}(W)}{}^{{}^{{}^{{}^{{}^{{}^{{}^{\mbox{\scriptsize $-1$}}}}}}}} =
\mathcal{K}^{-1} - \begin{bmatrix}
0 & 0 & 0 \\
0 & 0 & 0 \\
0 & 0 & W^{-1}
\end{bmatrix}.
\end{equation}
If $W$ is positive definite, then $A + B_2^TW^{-1}B_2$ is positive definite. We can obtain the inverse of $\mathcal{K}(W)$ by partitioning the matrix into a $2 \times 2$ system and applying formula \cref{eqn:posdefA_inv}, or we can compute it directly by Gaussian elimination. The result is
\setlength{\belowdisplayskip}{0pt} \setlength{\belowdisplayshortskip}{0pt}
$$
(\mathcal{K}(W))^{-1} = \ \ \ \ \ \ \ \ \ \ \ \ \ \ \ \ \ \ \ \ \ \ \ \ \ \ \ \ \ \ \ \ \ \ \ \ \ \ \ \ \ \ \ \ \ \ \ \ \ \ \ \ \ \ \ \ \ \ \ \ \ \ \ \ \ \ \ \ \ \ \ \ \ \ \ \ \ \ \ \ \ \ \ \ \ \ \ \ \ \ \ \ \
$$
\setlength{\abovedisplayskip}{0pt} \setlength{\abovedisplayshortskip}{0pt}
\setlength{\belowdisplayskip}{2pt} \setlength{\belowdisplayshortskip}{2pt}
\begin{footnotesize}
$$
\begingroup
\setlength\arraycolsep{3pt}\begin{bmatrix}
\hat{A} - \hat{A}B_2^T \bar{S}^{-1}B_2 \hat{A} & (I-\hat{A}B_2^T \bar{S}^{-1}B_2)\tilde{A}_W^{-1}B_1^T S_B^{-1} & \hat{A}B_2^T \bar{S}^{-1} \\
S_B^{-1}B_1\tilde{A}_W^{-1}(I - B_2^T \bar{S}^{-1}B_2 \hat{A}) & -S_B^{-1} - S_B^{-1}B_1 \tilde{A}_W^{-1}B_2^T \bar{S}^{-1} B_2 \tilde{A}_W^{-1}B_1^T S_B^{-1} & S_B^{-1}B_1 \tilde{A}_W^{-1}B_2^T \bar{S}^{-1} \\
\bar{S}^{-1}B_2\hat{A} & \bar{S}^{-1}B_2 \tilde{A}_W^{-1} B_1^T S_B^{-1} & -\bar{S}^{-1}
\end{bmatrix},\endgroup
$$
\end{footnotesize}
\normalsize
where $\tilde{A}_W = A+B_2^TW^{-1}B_2$, $S_B := B_1 \tilde{A}_W^{-1}B_1^T$, $\hat{A} : = \tilde{A}_W^{-1} - \tilde{A}_W^{-1}B_1^T S_B^{-1} B_1 \tilde{A}_W^{-1}$, and $\bar{S} := B_2\hat{A}B_2^T$. This formula is valid for any $3 \times 3$ saddle point system with an invertible leading block $\tilde{A}_W$.

When $B$ is minimally independent, we can combine \cref{eqn:k_aug_inv} with the result of \cref{thm:zeroblks} that the (3,3)-block of $\mathcal{K}^{-1}$ is zero to conclude that
\begin{equation}
\label{eqn:sbarw}
\bar{S} = W.
\end{equation}
It turns out that some of the terms of $(\mathcal{K}(W))^{-1}$ are related to the null space of $A$ and that we can further simplify our inverse formula using these connections. First, recall from \cref{eqn:kerab1} that $\ker(A) \subseteq \ker(B_1)$. This lets us establish the following simplifying result:
\begin{proposition}
	\label{thm:za_conv}
	Let $\mathcal{K}$ be nonsingular with minimally independent off-diagonal blocks, with $B$ partitioned as in \cref{eqn:minind_split}. Then $\tilde{A}_W^{-1}B_2^T = \hat{A}B_2^T$ for any positive definite $W$.
\end{proposition}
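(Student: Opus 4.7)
The plan is to unpack the definition of $\hat{A}$ and reduce the statement to a single cancellation involving $B_1 \tilde{A}_W^{-1} B_2^T$. Explicitly, since
\[
\hat{A}B_2^T = \tilde{A}_W^{-1}B_2^T - \tilde{A}_W^{-1}B_1^T S_B^{-1} B_1 \tilde{A}_W^{-1} B_2^T,
\]
the claimed identity $\tilde{A}_W^{-1}B_2^T = \hat{A}B_2^T$ is equivalent to showing that the correction term vanishes, for which it suffices to prove $B_1 \tilde{A}_W^{-1} B_2^T = 0$.

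To show this, I would combine two facts already established in the excerpt. First, \cref{thm:isNullOfA} tells us that $\tilde{A}_W^{-1} B_2^T$ is a null matrix of $A$; that is, every column of $\tilde{A}_W^{-1} B_2^T$ lies in $\ker(A)$. Second, \cref{eqn:kerab1} in the spectral analysis subsection gives $\ker(A) \subseteq \ker(B_1)$, since the rows of $B_1$ lie in the row space of $A$ (so $B_1 = QA$ for some $Q$, and any $v \in \ker(A)$ satisfies $B_1 v = QAv = 0$).

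Putting these together, each column of $\tilde{A}_W^{-1} B_2^T$ lies in $\ker(A) \subseteq \ker(B_1)$, so $B_1 \tilde{A}_W^{-1} B_2^T = 0$. Consequently the correction term $\tilde{A}_W^{-1}B_1^T S_B^{-1} B_1 \tilde{A}_W^{-1} B_2^T$ in the expansion of $\hat{A} B_2^T$ is zero, yielding $\hat{A} B_2^T = \tilde{A}_W^{-1} B_2^T$.

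There is no real obstacle here: the only slightly subtle ingredient is recognizing that the needed subspace inclusion has already been packaged as \cref{thm:isNullOfA} and \cref{eqn:kerab1}. The whole proof is essentially a one-line application of those two results once the definition of $\hat{A}$ is expanded.
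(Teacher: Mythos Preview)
Your proof is correct and essentially the same as the paper's: both expand $\hat{A}B_2^T$, invoke \cref{thm:isNullOfA} to place the columns of $\tilde{A}_W^{-1}B_2^T$ in $\ker(A)$, and then use $\ker(A)\subseteq\ker(B_1)$ to kill the correction term. The only cosmetic difference is that the paper phrases the last step via the complementary projector $I-\tilde{A}_W^{-1}B_1^T S_B^{-1}B_1$ fixing $\ker(B_1)$, whereas you show $B_1\tilde{A}_W^{-1}B_2^T=0$ directly.
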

\begin{proof}
	We can write
	\begin{align*}
	\hat{A}B_2^T &= \tilde{A}_W^{-1}B_2^T - \tilde{A}_W^{-1}B_1^T S_B^{-1}B_1 \tilde{A}_W^{-1}B_2^T \\
	&= (I - \tilde{A}_W^{-1}B_1^T S_B^{-1}B_1)\tilde{A}_W^{-1}B_2^T.
	\end{align*}
	Notice that $\tilde{A}_W^{-1}B_1^T S_B^{-1}B_1$ is a projector onto range$(\tilde{A}_W^{-1}B_1^T)$ along range$(B_1^T)$. Therefore, $(I - \tilde{A}_W^{-1}B_1^T S_B^{-1}B_1)$ is a projector onto $\ker(B_1)$. By \cref{thm:isNullOfA}, $\tilde{A}_W^{-1}B_2^T$ is a null matrix of $A$. Thus, it is also a null matrix of $B_1$, and is therefore unchanged by the projection.
\end{proof}

So far, all results presented hold for any positive definite $W$. The following proposition -- which is essentially the converse of \cref{thm:isNullOfA} -- shows how one specific choice of $W$ can lead to a simpler expression for $\mathcal{K}^{-1}$.

\begin{proposition}
	\label{thm:act_null}
	Let $Z_A \in \mathbb{R}^{n \times m_2}$ be a matrix whose columns form a basis of $\ker(A)$. This matrix can be written in the form
	$$
	Z_A =\tilde{A}_W^{-1}B_2^T
	$$ 
	with $W = B_2Z_A$.
\end{proposition}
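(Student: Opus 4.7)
\textbf{Proof plan for \Cref{thm:act_null}.} The identity $Z_A = \tilde{A}_W^{-1} B_2^T$ with $W = B_2 Z_A$ is equivalent, after left-multiplying by $\tilde{A}_W$, to the cleaner identity $\tilde{A}_W Z_A = B_2^T$. I would verify this latter form directly. The plan therefore has three ingredients: (i) show that $W := B_2 Z_A \in \mathbb{R}^{m_2 \times m_2}$ is invertible, so that $\tilde{A}_W$ is even defined; (ii) show that $\tilde{A}_W$ itself is invertible, so the left-multiplication is reversible; and (iii) perform the one-line algebraic check.

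For (i), suppose $B_2 Z_A c = 0$ for some $c \in \mathbb{R}^{m_2}$. Then $Z_A c \in \ker(B_2)$, but the columns of $Z_A$ lie in $\ker(A)$ and nonsingularity of $\mathcal{K}$ gives $\ker(A) \cap \ker(B_2) = \{0\}$, so $Z_A c = 0$; full column rank of $Z_A$ forces $c = 0$. The main obstacle lies in (ii): unlike in the earlier propositions of \Cref{sec:inv}, here $W = B_2 Z_A$ need not be symmetric, so the symmetry-based argument used to prove \Cref{thm:isNullOfA} (via \Cref{thm:eg16}) does not apply verbatim. Instead I would argue directly on the kernel: if $\tilde{A}_W x = 0$, then $A x = - B_2^T W^{-1} B_2 x$ lies in $\range(A) \cap \range(B_2^T)$. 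Using $\ker(A) \cap \ker(B_2) = \{0\}$ together with the dimension count $\dim \range(A) + \dim \range(B_2^T) = (n - m_2) + m_2 = n$, we get $\mathbb{R}^n = \range(A) \oplus \range(B_2^T)$, forcing $A x = 0$ and $B_2^T W^{-1} B_2 x = 0$ separately. Full column rank of $B_2^T$ and invertibility of $W$ then give $B_2 x = 0$, so $x \in \ker(A) \cap \ker(B_2) = \{0\}$.

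Having established (i) and (ii), step (iii) is immediate: $\tilde{A}_W Z_A = A Z_A + B_2^T W^{-1} B_2 Z_A = 0 + B_2^T W^{-1} W = B_2^T$, using $A Z_A = 0$ by construction of $Z_A$ and $B_2 Z_A = W$ by definition. Left-multiplication by $\tilde{A}_W^{-1}$ yields the claim. The delicate point throughout is that the proof must not tacitly inherit the ``$W$ positive definite'' hypothesis used in \Cref{sec:inv_nb2}; handling the possible asymmetry of $W = B_2 Z_A$ is the only nontrivial piece.
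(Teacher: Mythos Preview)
Your proposal is correct and follows the paper's own argument in its essentials: both of you first show that $W = B_2 Z_A$ is invertible via $\ker(A)\cap\ker(B_2)=\{0\}$, and both finish with the direct check $\tilde{A}_W Z_A = A Z_A + B_2^T W^{-1}(B_2 Z_A) = B_2^T$.

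The one substantive difference is your step~(ii). The paper's proof simply asserts that $Z_A = \tilde{A}_W^{-1}B_2^T$ is equivalent to $\tilde{A}_W Z_A = B_2^T$ and verifies the latter, never pausing to confirm that $\tilde{A}_W$ is invertible; immediately after the proposition it sidesteps the issue by remarking that one may, without loss of generality, replace $Z_A$ so that $L = B_2 Z_A$ becomes symmetric positive definite. Your direct-sum argument $\mathbb{R}^n = \range(A)\oplus\range(B_2^T)$ (obtained from $(\range(A)+\range(B_2^T))^\perp = \ker(A)\cap\ker(B_2) = \{0\}$ and the dimension count) closes this gap cleanly for arbitrary invertible $W$, symmetric or not. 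That is an improvement over the paper's treatment, and your identification of this as ``the only nontrivial piece'' is apt.
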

\begin{proof}
	 We first show that $B_2Z_A$ is nonsingular. Assume that 
	 $$
	 (B_2Z_A)x = B_2(Z_Ax) = 0
	 $$
	 for some $x$. Clearly, $Z_Ax \in \ker(A)$. Moreover, $B_2Z_Ax = 0$ implies that $Z_Ax \in \ker(B_2)$. But, we assumed that $\begin{bmatrix} A & B_2^T \\ B_2 & 0 \end{bmatrix}$ is nonsingular, and the existence of a nonzero vector $Z_A x$ belonging to both null spaces would contradict this assumption. Therefore, $x$ must be zero, which means that $B_2Z_A$ is nonsingular.
	
	Next, the matrix $Z_A$ is equal to $\tilde{A}_W^{-1}B_2^T$ if and only if $\tilde{A}_WZ_A = B_2^T$. We now confirm that when $W = B_2Z_A$,
	\begin{equation*}
	\tilde{A}_WZ_A = (A + B_2^T(B_2Z_A)^{-1}B_2)Z_A = B_2^T
	\end{equation*}
	for any $Z_A$ such that $AZ_A=0$, as required.
\end{proof}

We can now use the previous results to simplify the expression for $\mathcal{K}^{-1}$. Let us denote the weight matrix by
\begin{equation}
\label{eqn:def_L}
L := B_2Z_A
\end{equation}
to distinguish it from the arbitrary weight matrix $W$. We assume without loss of generality that $L$ is symmetric positive definite. (We say ``without loss of generality'' because we can always make $L$ positive definite by taking $Z_A(B_2Z_A)^T$ as our null matrix of $A$, and defining $L = B_2Z_A(B_2Z_A)^T$.) We drop the subscript from the corresponding augmented matrix $\tilde{A}_L$ and define 
$$
\tilde{A} := \tilde{A}_L = A+B_2^T L^{-1}B_2.
$$
We can simplify the expression for $(\mathcal{K}(W))^{-1}$ as follows: replace all occurrences of $\bar{S}$ with $L$ (\Cref{eqn:sbarw}); $\tilde{A}^{-1}B_2^T$ or $\hat{A}B_2^T$ with $Z_A$ (\cref{thm:za_conv,thm:act_null}); and $B_1Z_A$ or $Z_A^TB_1^T$ with zero (\Cref{eqn:kerab1}). This gives
\begin{equation}
\label{eqn:Kinv_nullA}
\mathcal{K}^{-1} = \left[\begin{array}{c c c}
\hat{A} - Z_A L^{-1}Z_A^T & \tilde{A}^{-1}B_1^T S_B^{-1} & Z_A L^{-1} \\
S_B^{-1}B_1\tilde{A}^{-1} & -S_B^{-1} & 0 \\
L^{-1}Z_A^T & 0 & 0
\end{array}
\right].
\end{equation}
 We refer to equation \eqref{eqn:Kinv_nullA} as the \textit{null-A inverse formula}. Recalling that $Z_A = \tilde{A}^{-1}B_2^T$, we can alternatively write the leading block in multiplicative form:
\begin{equation}
\label{eqn:Kinv_nullA_mult}
\mathcal{K}^{-1} = \left[\begin{array}{c c c}
(I-\tilde{A}^{-1}B_1^TS_B^{-1}B_1)\tilde{A}^{-1}(I-B_2^TL^{-1}Z_A^T) & \tilde{A}^{-1}B_1^T S_B^{-1} & Z_A L^{-1} \\
S_B^{-1}B_1\tilde{A}^{-1} & -S_B^{-1} & 0 \\
L^{-1}Z_A^T & 0 & 0
\end{array}
\right].
\end{equation}

\section{A preconditioner for the general case}
\label{sec:prec_gen}
In this section, we propose a block triangular preconditioner based on our insights about $\mathcal{K}^{-1}$ from \cref{sec:inv}. Though these analyses assumed that $B$ was minimally independent, we will combine elements of the null-$B_2$ and null-$A$ formulas to obtain a preconditioner that remains ideal even when this assumption does not hold. While we continue to assume that $B$ is partitioned into $B_1 \in \mathbb{R}^{m_1 \times n}$ and $B_2 \in \mathbb{R}^{m_2 \times n}$, and that $\begin{bmatrix} A & B_2^T \\ B_2 & 0 \end{bmatrix}$ is nonsingular, we no longer assume that the rows of $B_1$ are in the row space of $A$.

The following result illustrates a connection between the null-$B_2$ and null-$A$ formulas.
\begin{proposition}
	\label{thm:v}
	Suppose that $\nulls(A) = m_2$ and $B_2$ is a subset of $m_2$ rows of $B$ such that $\begin{bmatrix}
	A & B_2^T \\ B_2 & 0
	\end{bmatrix}$ is invertible. Then
	$$
	\tilde{A}^{-1}(I-B_2^TL^{-1}Z_A^T) = V,
	$$
	where $V=Z_{B2}(Z_{B2}^TAZ_{B2})^{-1}Z_{B2}^T$.
\end{proposition}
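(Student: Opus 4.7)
The plan is to show equality by premultiplying both sides by $\tilde{A}$ and verifying that the two products coincide with the projector $P_A$ of \cref{thm:pac_1,thm:pac_2}; since $\tilde{A}$ is invertible, this suffices.

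For the left-hand side, I would first use \cref{thm:act_null} together with the symmetry of $\tilde{A}$ to replace $Z_A^T$ by $B_2\tilde{A}^{-1}$, obtaining
$$
\tilde{A}^{-1}(I - B_2^T L^{-1}Z_A^T) = \tilde{A}^{-1} - \tilde{A}^{-1} B_2^T L^{-1}B_2 \tilde{A}^{-1}.
$$
Multiplying through by $\tilde{A}$ on the left gives $I - B_2^T L^{-1}B_2 \tilde{A}^{-1}$. Because $\tilde{A} = A + B_2^T L^{-1}B_2$, we have $B_2^T L^{-1}B_2\tilde{A}^{-1} = I - A\tilde{A}^{-1}$, so this expression collapses to $A\tilde{A}^{-1}$. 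By \cref{thm:pac_1} (applied with $W = L$), this is precisely the projector $P_A$ onto $\range(A)$ along $\ker(B_2)$.

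For the right-hand side, I would observe that the columns of $Z_{B_2}$ form a basis of $\ker(B_2)$, so $B_2 V = B_2 Z_{B_2}(Z_{B_2}^T A Z_{B_2})^{-1}Z_{B_2}^T = 0$. Consequently $B_2^T L^{-1} B_2 V = 0$, which yields $\tilde{A}V = AV$. \Cref{thm:pac_2} then identifies $AV$ as the same projector $P_A$. Since $\tilde{A}\cdot\text{LHS} = P_A = \tilde{A}V$ and $\tilde{A}$ is invertible, the identity follows.

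The potential snag is bookkeeping: the projector $P_A$ is characterized in \cref{thm:pac_1} in terms of an arbitrary weight matrix and in \cref{thm:pac_2} in terms of $Z_{B_2}$, and one needs to notice that these descriptions agree (both project onto $\range(A)$ along $\ker(B_2)$) so the two computations land on the same object. Once that observation is made, the algebra is a two-line reduction in each case.
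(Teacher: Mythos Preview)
Your proof is correct. Both your argument and the paper's ultimately show that $\tilde{A}\cdot(\text{LHS})$ and $\tilde{A}V$ equal the projector $P_A$, but the bookkeeping differs. The paper first identifies $(I-B_2^TL^{-1}Z_A^T)$ with $P_A=AV$ via \cref{thm:pac_2}, and then proves $\tilde{A}^{-1}AV=V$ by transposing $P_A$ to conclude that $\tilde{A}^{-1}A$ is a projector onto $\ker(B_2)$, hence fixes $Z_{B_2}$. You instead reach $\tilde{A}V=AV$ directly from the trivial fact $B_2Z_{B_2}=0$, which avoids the transpose detour entirely. The two routes invoke the same lemmas and are of comparable length; yours is marginally more elementary on the right-hand side, while the paper's makes the projector structure of $(I-B_2^TL^{-1}Z_A^T)$ explicit up front.
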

\begin{proof}
	Notice that $(I-B_2^TL^{-1}Z_A^T)$ is yet another expression for $P_A$, the projector onto $\range(A)$ along $\ker(B_2)$. So by \cref{thm:pac_2}, we can write
	\begin{equation}
	\label{eqn:pf1}
	\tilde{A}^{-1}(I-B_2^TL^{-1}Z_A^T) = \tilde{A}^{-1}AV.
	\end{equation}
	Because $A\tilde{A}^{-1} = P_A$ (by \cref{thm:pac_1}), $\tilde{A}^{-1}A = (A\tilde{A}^{-1})^T$ is a projector onto $\ker(B_2)$ along $\range(A)$. This means that $\tilde{A}^{-1}AZ_{B2} = Z_{B2}$. Therefore,
	$$
	\tilde{A}^{-1}AV = V,
	$$
	which, together with \cref{eqn:pf1}, completes the proof.
\end{proof}
For the preconditioner, we set  the $(2,3)$-, $(3,2)$-, and $(3,3)$-blocks equal to zero, in common with both inverse formulas. For the $(1,3)$- and $(3,1)$-blocks, we use the exact values of the null-$A$ formula. The four blocks in the top left we replace by a block diagonal Schur-complement-like operator, but we use $V$ in place of $\tilde{A}^{-1}$, yielding the $3 \times 3$ block triangular preconditioner:
\begin{equation}
\label{eqn:p3t}
\mathcal{P}_{3,T}^{-1} = \begin{bmatrix}
V & 0 & Z_A L^{-1} \\
0 & (B_1VB_1^T)^{-1} & 0 \\
L^{-1}Z_A^T & 0 & 0
\end{bmatrix}.
\end{equation}

To show that this matrix is well-defined, we note that $B_1VB_1^T$ is invertible for any nonsingular $\mathcal{K}$. Writing
$$
B_1VB_1^T = (B_1Z_{B2})\Big(Z_{B2}^TAZ_{B2}\Big)^{-1}(B_1Z_{B2})^T
$$
and recalling that $Z_{B2}^TAZ_{B2}$ is positive definite, we see that $B_1VB_1^T$ is positive definite if and only if $B_1Z_{B2}$ has full row rank. We prove this by showing that $Z_{B2}^TB_1^T$ has full column rank. Assume that $Z_{B2}^TB_1^Tx = 0$ for $x \ne 0$. Because $B_1^T$ is full-rank, this must mean that $B_1^T x \in \range(B_2^T)$, which in turn implies that $\range(B_1^T) \cap \range(B_2^T) \ne \emptyset$. This is a contradiction, as it implies that that $\begin{bmatrix} B_1^T & B_2^T \end{bmatrix}$ does not have full column rank, which would mean that $\mathcal{K}$ is singular.

We can now state the eigenvalues of the preconditioned matrix.

\begin{theorem}
	\label{thm:p3t_eig}
	Suppose that $\nulls(A) = m_2$ and $B_2$ is a subset of $m_2$ rows of $B$ such that $\begin{bmatrix}A & B_2^T \\ B_2 & 0 \end{bmatrix}$ is invertible. Then $\mathcal{P}_{3,T}^{-1}\mathcal{K}$ has eigenvalues $\lambda = 1, \frac{1+\sqrt{5}}{2}, \frac{1-\sqrt{5}}{2}$, with respective geometric multiplicities $n-m_1+m_2$, $m_1$, and $m_1$.
\end{theorem}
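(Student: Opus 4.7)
The plan is to write the generalized eigenvalue problem $\mathcal{K}v = \lambda \mathcal{P}_{3,T}v$ explicitly and reduce it to a form that reveals the three eigenvalues. Writing $v = (x^T, y_1^T, y_2^T)^T$ and using the given form of $\mathcal{P}_{3,T}^{-1}$, the equations $\mathcal{P}_{3,T}^{-1}\mathcal{K}v = \lambda v$ read
\begin{align*}
V(Ax + B_1^T y_1 + B_2^T y_2) + Z_A L^{-1} B_2 x &= \lambda x, \\
(B_1 V B_1^T)^{-1} B_1 x &= \lambda y_1, \\
L^{-1} Z_A^T(Ax + B_1^T y_1 + B_2^T y_2) &= \lambda y_2.
\end{align*}

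To simplify I would use four identities. Three are immediate: $V B_2^T = 0$ since $V = Z_{B2}(\cdot)Z_{B2}^T$ and $B_2 Z_{B2} = 0$; $Z_A^T A = 0$ from $A Z_A = 0$ and the symmetry of $A$; and $Z_A^T B_2^T = L$ from the definition of $L$ and its assumed symmetry. The crucial fourth identity---and the main obstacle---is the collapsing identity $V A + Z_A L^{-1} B_2 = I$. This follows by combining \cref{thm:v}, which yields $VA = \tilde{A}^{-1}A$ after using $Z_A^T A = 0$, with the expansion $I = \tilde{A}^{-1}\tilde{A} = \tilde{A}^{-1}A + \tilde{A}^{-1}B_2^T L^{-1} B_2$ together with the substitution $\tilde{A}^{-1}B_2^T = Z_A$ from \cref{thm:act_null}. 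Applying these reductions, the first equation collapses dramatically to $V B_1^T y_1 = (\lambda - 1) x$, while the third becomes $L^{-1} Z_A^T B_1^T y_1 = (\lambda - 1) y_2$.

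For the case $\lambda = 1$, the reduced first equation forces $V B_1^T y_1 = 0$; multiplying by $B_1$ and using the invertibility of $B_1 V B_1^T$ (established immediately before the theorem) yields $y_1 = 0$. The middle equation then requires $B_1 x = 0$, placing $x$ in $\ker(B_1)$ (dimension $n - m_1$, since $B_1$ has full row rank $m_1$), and the third equation is trivially satisfied with $y_2 \in \mathbb{R}^{m_2}$ free. The $\lambda = 1$ eigenspace therefore has dimension $n - m_1 + m_2$.

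For $\lambda \neq 1$, I would substitute $x = (\lambda - 1)^{-1} V B_1^T y_1$ from the reduced first equation into the middle equation, obtaining $(\lambda - 1)^{-1} y_1 = \lambda y_1$, so a nontrivial $y_1$ requires $\lambda^2 - \lambda - 1 = 0$, whose roots are exactly $(1 \pm \sqrt{5})/2$. For each such root, $y_1$ ranges freely over $\mathbb{R}^{m_1}$ and determines $x$ and $y_2$ through the reduced first and third equations; nontrivial $y_1$ forces nontrivial $x$ (else $B_1 V B_1^T y_1 = 0$ would contradict invertibility), so each root contributes geometric multiplicity exactly $m_1$. Summing, $(n - m_1 + m_2) + m_1 + m_1 = n + m_1 + m_2$ matches the size of $\mathcal{K}$, confirming that all eigenvalues have been found.
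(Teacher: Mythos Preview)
Your proof is correct and follows essentially the same route as the paper's: both hinge on the collapsing identity $VA + Z_A L^{-1}B_2 = I$ (the paper phrases it via the projector $P_A^T$, you via \cref{thm:v} and $\tilde{A}^{-1}B_2^T = Z_A$), and both reduce the eigenvalue equations to the quadratic $\lambda^2 - \lambda - 1 = 0$ on an $m_1$-dimensional space. The only cosmetic difference is that the paper eliminates $y_1$ in terms of $x$ and reads off the eigenvalues from a projector acting on $x$, whereas you eliminate $x$ in terms of $y_1$; the resulting counts and arguments are otherwise the same.
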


\begin{proof}
	The preconditioned operator is
	$$
	\mathcal{P}_{3,T}^{-1}\mathcal{K} = \begin{bmatrix}
	VA + Z_AL^{-1}B_2 & VB_1^T & 0 \\
	(B_1VB_1^T)^{-1}B_1 & 0 & 0 \\
	0 & L^{-1}Z_A^TB_1^T & I
	\end{bmatrix} = \begin{bmatrix}
	I & VB_1^T & 0 \\
	(B_1VB_1^T)^{-1}B_1 & 0 & 0 \\
	0 & L^{-1}Z_A^TB_1^T & I
	\end{bmatrix}.
	$$
	The second equality holds because $VA = (AV)^T = P_A^T$ (\cref{thm:pac_2}) and
	$$
	Z_AL^{-1}B_2 = I - (I-B_2^TL^{-1}Z_A^T)^T = I-P_A^T.
	$$
	Also note that we cannot simplify the $(3,2)$-block of $\mathcal{P}_{3,T}^{-1}\mathcal{K}$ to zero, as we are no longer assuming that $B_1$ is linearly dependent on $A$. 
	
	We write the eigenvalue equations as
	\begin{subequations}
	\label{eqn:eig_p3t}
	\begin{alignat}{3}
	\label{eqn:eig_p3t_x}
	x & \ + \ \  VB_1^Ty_1&  =&& \lambda x; \\
	\label{eqn:eig_p3t_y1}
	(B_1VB_1^T)^{-1}B_1x \ & \ & \ \ =&& \ \lambda y_1; \\
	\label{eqn:eig_p3t_y2}
	\ \ & L^{-1}Z_A^TB_1^Ty_1& \ + \ y_2 \ \ =&& \ \lambda y_2.
	\end{alignat}
	\end{subequations}	
	We immediately find that $\lambda = 1$ is an eigenvalue with multiplicity at least $n - m_1 +m_2$, with eigenvectors of the form
	$$
	\begin{bmatrix}
	x \\
	0 \\
	y_2
	\end{bmatrix} \  \textrm{for } x \in \ker(B_1), \ y_2 \in \mathbb{R}^{m_2}.
	$$
	For $\lambda \ne 1$, we can eliminate $y_2$ by setting
	$$
	y_2 = \frac{1}{\lambda-1} L^{-1}Z_A^TB_1^Ty_1,
	$$
	by \cref{eqn:eig_p3t_y2}. Next, we solve \cref{eqn:eig_p3t_y1} for $y_1$ and substitute into \cref{eqn:eig_p3t_x} to give
	$$
	x + \frac{1}{\lambda}VB_1^T(B_1 V B_1^T)^{-1}B_1x = \lambda x.
	$$
	Note that $VB_1^T(B_1 V B_1^T)^{-1}B_1$ is a rank-$m_1$ projector onto $\range(VB_1^T)$. Considering $x$ in this range, we obtain
	$$
	x + \frac{1}{\lambda}x = \lambda x,
	$$
	which gives $\lambda = \frac{1 \pm \sqrt{5}}{2}$, each with multiplicity $m_1$. Along with the $n-m_1+m_2$ eigenvalues equal to 1, we have accounted for all eigenvalues.
\end{proof}

\begin{remark}
	\label{thm:rmk1}
	There are two ways to compute $V$. In practice, $V$ needs to be approximated, and depending on the problem in question, one of these formulations may be easier to handle. If we have a way to approximately form and solve the augmented leading block, we could use
	$$
	\tilde{V} \approx (A+B_2^T L^{-1}B_2)^{-1}(I-B_2^T L^{-1}Z_A^T).
	$$
	Alternatively, if a null space of $B_2$ is available and we can approximately invert $Z_{B2}^TAZ_{B2}$, we could instead use
	$$
	\tilde{V} \approx Z_{B2}(Z_{B2}^TAZ_{B2})^{-1}Z_{B2}^T.
	$$
\end{remark}
\begin{remark}
	This preconditioner can also be used if a null space of $A$ is not readily available but a null space of $B_2$ is. We can use the second strategy of \cref{thm:rmk1} to approximate $V$ and replace the $(3,1)$- and $(1,3)$-blocks of $\mathcal{P}_{3,T}^{-1}$ by their null-$B_2$ values, giving the mathematically equivalent but more expensive formulation:
	$$
	\mathcal{P}_{3,T}^{-1} = \begin{bmatrix}
	V & 0 & (I-VA)B_2^T(B_2B_2^T)^{-1} \\
	0 & (B_1VB_1^T)^{-1} & 0 \\
	(B_2B_2^T)^{-1}B_2(I-AV) & 0 & 0
	\end{bmatrix}.
	$$
\end{remark}
\begin{remark}
	If $m_1=0$ and $m_2 = m$ (i.e., $A$ is maximally rank-deficient), then this preconditioner is actually the exact inverse of $\mathcal{K}$. See \cite{estrin15} for proof, and for preconditioners derived by approximating the leading block.
\end{remark}

\section{Discussion and conclusions}
\label{sec:conclusions}
We have presented three ideal preconditioners for saddle point systems with a singular leading block. The two block diagonal preconditioners rely on assumptions about the rank structure of the matrix (maximal rank-deficiency or minimal independence), while the block triangular preconditioner does not. While the block diagonal preconditioners may be used when the rank structure assumptions do not hold, we will no longer have a constant number of eigenvalues in the preconditioned operator. We refer to \cite{greif06} for analysis of $\mathcal{P}_{2,D}$ in the general case. A corresponding investigation for $\mathcal{P}_{3,D}$ is a subject for future work.

In practice, all three precondtioners will be too expensive to apply exactly. All require inverting an augmented leading block and computing a Schur complement or Schur-complement-like expression. Additionally, the $3 \times 3$ block preconditioners require identifying a subset of rows of the off-diagonal block that are linearly independent of the leading block, which may be difficult in some cases, and the block triangular preconditioner requires a null space of $A$ or $B_2$. Making these preconditioners practical will require effective approximations to the more expensive terms. These approximation strategies will need to be informed by the problem to be solved.

\bibliographystyle{siamplain}
\bibliography{references}

\begin{thebibliography}{1}

\bibitem{benzi05}
{\sc M.~Benzi, G.~H. Golub, and J.~Liesen}, {\em Numerical solution of saddle
  point problems}, ACTA NUMERICA, 14 (2005), pp.~1--137.

\bibitem{estrin15}
{\sc R.~Estrin and C.~Greif}, {\em On nonsingular saddle-point systems with a
  maximally rank deficient leading block}, SIAM Journal on Matrix Analysis and
  Applications, 36 (2015), pp.~367--384.

\bibitem{estrin16}
{\sc R.~Estrin and C.~Greif}, {\em Towards an optimal condition number of
  certain augmented lagrangian-type saddle-point matrices}, Numerical Linear
  Algebra with Applications, 23 (2016), pp.~693--705.

\bibitem{golub03}
{\sc G.~H. Golub and C.~Greif}, {\em On solving block-structured indefinite
  linear systems}, SIAM Journal on Scientific Computing, 24 (2003),
  pp.~2076--2092, \url{https://doi.org/10.1137/S1064827500375096},
  \url{https://doi.org/10.1137/S1064827500375096},
  \url{https://arxiv.org/abs/https://doi.org/10.1137/S1064827500375096}.

\bibitem{golub05}
{\sc G.~H. Golub, C.~Greif, and J.~M. Varah}, {\em An algebraic analysis of
  block diagonal preconditioner for saddle point systems}, SIAM J. Matrix Anal.
  Appl., 27 (2005), pp.~779--792.

\bibitem{greif06}
{\sc C.~Greif and D.~Sch{\"o}tzau}, {\em Preconditioners for saddle point
  linear systems with highly singular {$(1,1)$} blocks}, Electron. Trans.
  Numer. Anal., 22 (2006), pp.~114--121.

\bibitem{murphy99}
{\sc M.~F. Murphy, G.~H. Golub, and A.~J. Wathen}, {\em A note on
  preconditioning for indefinite linear systems}, SIAM J. Sci. Comput., 21
  (1999), pp.~1969--1972, \url{https://doi.org/10.1137/S1064827599355153},
  \url{http://dx.doi.org/10.1137/S1064827599355153}.

\end{thebibliography}
\end{document}